\newtheorem{satz}{Theorem}
\newtheorem{theorem}[satz]{Theorem}
\newtheorem{lemma}[satz]{Lemma}
\def\no{\noindent}
\def\sbeq{\subseteq}
\def\N{\mathbb {N}}
\def\Z{\mathbb {Z}}
\def\F{\mathbb {F}}
\def\zn{\Z/N\Z}
\def\e{\varepsilon}
\def\s{\sigma}
\def\a{\alpha}
\def\C{\mathbb{C}}
\def\h{\widehat}
\def\d{\delta}
\def\({\big (}
\def\){\big )}
\def\g{\gamma}
\def\G{\Gamma}
\def\b{\beta}
\def\dim{{\rm dim}}
\def\le{\leqslant}
\def\ge{\geqslant}
\def\_phi{\varphi}
\def\m{\times}
\def\k{\kappa}
\def\D{\Delta}
\def\s{\sigma}
\def\t{\tau}
\def\th{\theta}
\def\th{\theta}
\def\rk{{\rm{ rk}}}
\def\m{\mu}
\def\Span{{\rm Span\,}}
\def\D{\Delta}
\def\La{\Lambda}
\def\F{\mathbb {F}}
\def\hA{\h {1_A}}
\def\hB{\h {1_B}}
\begin{document}
\title{\bf Improved bound in  Roth's theorem on arithmetic progressions}

\author{ By\\  \\{\sc Tomasz Schoen\footnote{The author is partially supported by National Science Centre, Poland grant 2019/35/B/ST1/00264}}}

\date{}

\maketitle

\begin{abstract} We prove that if $A\sbeq \{1,\dots,N\}$ does not contain any non-trivial three-term arithmetic progression, then
$$|A|\ll \frac{(\log\log N)^{3+o(1)}}{\log N^{}}N\,.$$ 
\end{abstract}

\section{Introduction}\label{s:intro}

In this paper we prove the following bound in Roth's theorem on arithmetic progressions.

\begin{theorem}\label{t:roth} If $A\sbeq \{1,\dots, N\}$ does not contain any non-trivial  arithmetic progression of length three then
$$|A|\ll \frac{(\log\log N)^3(\log\log\log N)^5}{\log N^{}}N\,.$$
\end{theorem}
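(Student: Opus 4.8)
The proof is a density-increment argument carried out inside Bohr sets, in the lineage of Bourgain, Sanders and Bloom; the novelty is a more economical treatment of the step in which the large spectrum is shown to be structured. \textbf{Setup.} We may assume $N$ prime and replace $\{1,\dots,N\}$ by $\zn$, losing only a constant, so it suffices to bound the density of a set containing no nontrivial three-term progression inside a long progression sitting in $\zn$. The iteration maintains a regular Bohr set $B=B(\G,\r)$ of rank $d=|\G|$ together with a translate $A'$ of $A$ with $A'\sbeq B$ and relative density $\a=|A'|/|B|$, plus a long arithmetic progression inside $B$ so that ``$A$ has no progression'' genuinely constrains $A'$ within $B$; regularity of Bohr sets lets us pass freely between $B$, its dilates and its translates with negligible loss.

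\textbf{Fourier step and structural step.} If $A'$ contains no nontrivial three-term progression, then expanding the relative count of progressions via $\sum_r \h{1_{A'}}(r)^2\ov{\h{1_{A'}}(2r)}$ and discarding the trivial frequency forces a definite amount of $\ell^2$-mass of $\h{1_{A'}}$, correlated with its dilate, onto the large spectrum $\Lambda:=\Spec_{c\a}(A')$; hence either a single frequency in $\Lambda$ is very large, giving at once a large density increment on a subprogression, or the mass is genuinely spread over $\Lambda$. In the latter case one must show $\Lambda$ is approximately low-dimensional and feed this back as a Bohr-set increment: one produces a regular $B'\sbeq B$ of rank $d'\le d+\Delta(\a)$ and radius $\r'\ge\r(\a/d)^{O(1)}$ on which a translate of $A$ has relative density at least $\a(1+c\a)$. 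The entire strength of the theorem sits in making the rank cost $\Delta(\a)=\log^{O(1)}(1/\a)$ have the smallest possible exponent: the input is a sharpened structure theory of the large spectrum of a dense set --- its small additive dimension and the resulting passage, via a Chang/Bogolyubov--Ruzsa-type or Croot--Sisask almost-periodicity argument at the right $L^p$ scale, from ``$\Lambda$ carries much $\ell^2$-mass'' to ``$A$ is dense on a sub-Bohr-set'' --- optimised to keep the losses in rank and radius minimal.

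\textbf{Iteration and optimisation.} Writing $d_k,\r_k,\a_k$ for the rank, radius and density at stage $k$, the weak increment $\a_{k+1}\ge\a_k(1+c\a_k)$ means the density doubles over roughly $1/\a_k$ steps, so the process runs for $T\asymp 1/\a_0$ steps and must eventually halt --- which can only happen because the current Bohr set has become too small to contain the required progression, i.e. because $|B_k|\asymp(c\r_k/d_k)^{d_k}N$ fails to be large. Combining $d_{k+1}\le d_k+\Delta(\a_k)$ and $\log(1/\r_{k+1})\le\log(1/\r_k)+O(\log(1/\a_k)+\log d_k)$ with the stopping inequality $d_T\log(d_T/\r_T)\gg\log N$ --- where $d_T\approx(1/\a_0)\log^{O(1)}(1/\a_0)$ and $\log(d_T/\r_T)=\log^{O(1)}(1/\a_0)$ --- and balancing carefully, one obtains $1/\a_0\ll\log N/\big((\log\log N)^3(\log\log\log N)^5\big)$, i.e.
$$|A|\ll\frac{(\log\log N)^3(\log\log\log N)^5}{\log N}\,N\,;$$
the exponents $3$ and $5$ are precisely what this balancing produces from the optimised $\Delta(\a)$.

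\textbf{Main obstacle.} As always in this circle of ideas the difficulty is the structural step: each earlier improvement (Bourgain $\rightarrow$ Sanders $\rightarrow$ Bloom) amounted to shrinking $\Delta(\a)$, and reaching exponent $3$ requires both the sharpest available structural and almost-periodicity input and a parsimonious return to a Bohr set, with every polylogarithmic factor tracked. The remaining ingredients --- Bohr-set regularity, the Fourier computation, and the bookkeeping of ranks and radii --- are by now routine, but must be carried out without wasting any $\log\log$ factor.
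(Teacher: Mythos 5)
Your proposal is a sketch of the general density-increment framework rather than a proof, and the concrete claims it does make about the mechanism are wrong; there is a genuine gap. You locate all the improvement over Bloom's $(\log\log N)^4$ in a uniformly smaller rank cost $\Delta(\a)=\log^{O(1)}(1/\a)$ per iterative step, achieved by an unspecified ``sharpened structure theory of the large spectrum'', and you run the iteration with the weak increment $\a_{k+1}\ge\a_k(1+c\a_k)$ for $T\asymp 1/\a_0$ steps. Neither of these matches the paper. The paper gets its entire gain from a \emph{single} initial step: Theorem \ref{t:increment} boosts the density to $\a_0\gg\d\log(1/\d)/(\log\log(1/\d))^5$ on a Bohr set of rank $O(\d^{-1+c})$, after which Bloom's iteration (Lemma \ref{l:bloom-iteration}) --- the $L^2$ increment giving a constant-factor density gain per step at rank cost $\a^{-1}\log(1/\a)$, so needing only $O(\log(1/\a))$ steps --- is applied unchanged. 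Running Bloom's bookkeeping with $\a_0$ in place of $\d$ is precisely what converts $\log^4(1/\d)$ into $\log^3(1/\d)\log\log^5(1/\d)$ and gives the stated bound.

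The content of that one-time boost is a case analysis on which scale of $|\h{1_A}(r)|$ carries the $L^3$-mass on $\D_{\d^{1+\m}}$, and none of it appears in your sketch. If middle-sized or small coefficients dominate, the paper uses the Bateman--Katz--Bloom spanning lemma (Lemma \ref{l:bloom}, Lemma \ref{l:bateman-katz-span}) and, in the small case, the dichotomy between additively smoothing and nonsmoothing spectra, with the Bateman--Katz structure theorem (Theorem \ref{t:bateman-katz-structure}) handling the nonsmoothing case --- in particular the hard subcase $\D_\th\approx X+H$ with $H$ of small doubling (Lemma \ref{l:increment-3-hard}); these cases give a polynomial boost $\d^{-c}$. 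The bottleneck that fixes the final exponents is the remaining case where large coefficients dominate (Lemma \ref{l:increment-4}): there a spectral gap, Chang's lemma, Fourier approximation on a low-rank Bohr set, and Bloom's lower bound on the count of three-term progressions yield only the factor $\log(1/\d)/\log\log^5(1/\d)$. The Croot--Sisask almost-periodicity you invoke plays no role here, and without the case analysis and the Bateman--Katz structural input your proposal has no way to produce the required increment.
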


The first non-trivial upper bound concerning the size of progression-free
sets was given by Roth \cite{roth} who showed the above inequality with $N/\log\log N$. Then it was subsequently refined
by Heath-Brown \cite{heath-brown} and Szemer\'edi \cite{szemeredi-3ap} with a denominator of $(\log N)^c$ for a positive constant $c$,
 by  Bourgain \cite{bourgain-1/2, bourgain-2/3} and 
Sanders \cite{sanders-3/4}
  by proving that bound with $c=1/2-o(1)$, $c=2/3-o(1)$ and $c=3/4-o(1)$.  Sanders \cite{sanders-1} showed a result close to the logarithmic barrier
	$$|A|\ll \frac{(\log\log N)^{6}}{\log N^{}}N$$
	and  Bloom \cite{bloom} further proved that
		$$|A|\ll \frac{(\log\log N)^{4}}{\log N^{}}N\,,$$
		for  set $A\sbeq \{1,\dots, N\}$  avoiding three-term arithmetic progressions. Recently a slightly weaker bound was obtained by a different argument by Bloom and Sisask \cite{bloom-sisask}. Other results related to Roth's theorem can be found in  
\cite{green-roth}, \cite{helfgott-deroton}, \cite{naslund}, \cite{schoen-shkredov} and \cite{schoen-sisask}. 

Let us also comment on the recent progress for the analogous problem
in  a high-dimensional case. Croot, Lev and Pach
\cite{croot-lev-pach} proved, by a polynomial method, an upper
estimate $(4-c)^n$ with some constant $c>0$, for the size of
progression-free sets in $(\Z/4\Z)^n$. Later Ellenberg and Gijswijt
\cite{ellenberg-gijswijt} obtained the bound  $(3-c)^n$ with a
positive constant $c$  for subsets of $\F_3^n$. The latter result significantly
improves  the previous best  bound of Bateman and Katz
\cite{bateman-katz},  however this the paper
\cite{bateman-katz} contains many deep results and valuable ideas
that could  potentially be also used in the integer case.

Each of the  mentioned papers contains significant novel ideas
and methods, any of them are used in our proof of Theorem
\ref{t:roth}. We employ the density increment argument obtained via
the Fourier analytical method invented by Roth \cite{roth}. We  make
use of the Bohr set machinery introduced by Bourgain
\cite{bourgain-1/2}. We focus on the structure of the large
spectrum, explored  first by Bourgain \cite{bourgain-2/3} and
thenceforth used in all further works. We also take advantage of 
deep insight into the structure of the large spectrum done by
Bateman and Katz in \cite{bateman-katz} and
\cite{bateman-katz-nonsmooth}.

Finally, let us mention that as far as the lower bound on the maximal size of progression-free subsets of $\{1,\dots, N\}$ is concerned,
the first non-trivial lower estimate $N^{1-c(\log \log N)^{-1}}$ was established by Salem and Spencer \cite{salem-spencer}. Then Behrend \cite{behrend} improved it to
$\exp(-c\sqrt{\log N})N.$ Elkin \cite{elkin}  refined slightly Behrend's bound by a factor of $(\log N)^{1/2}$ and his argument was simplified in \cite{green-wolf}.

\section{Notation, Bohr sets and standard results} \label{s:notation}

All sets considered in the paper are finite subsets of $\Z$ or $\Z/N\Z$.
We write $1_A(x)$ for the indicator function of  set $A$. Given functions
$f,g:\Z/N\Z\rightarrow\mathbb{C}$, the convolution of $f$ and $g$
is defined by
$$
(f*g)(x)=\sum_{t\in\Z/N\Z}f(t)g(x-t).
$$
The Fourier coefficients of a function $f:\Z/N\Z\to\C$ are defined
by
$$
\h f(r)=\sum_{x\in\Z/N\Z}f(x)e^{-2\pi ixr/N},
$$
where $r\in\Z/N\Z$, and the above applies to the indicator function
of  $A\sbeq\Z/N\Z$ as well.
Parseval's formula states in particular that 
$$\sum_{r=0}^{N-1}|\hA(r)|^{2}=|A|N\,.$$
We also recall the fact  that 
$$\h{(1_A*1_B)}(r)=\hA(r)\hB(r)\,.$$
For a real number $\th\ge 0$,  the $\th-$spectrum of a set $A$ is the set
$$
\D_{\th}(A)=\big \{r\in\Z/N\Z:|\hA(r)|\ge\th|A|\big \}.
$$
For a specified set $A$ we often write $\D_{\th}$ instead of $\D_{\th}(A).$ 

For $m\in \N$ by  $E_{2m}(A)$  we denote  the number of $2m$--tuples $(a_1,\dots,a_m,b_1,\dots,b_m)\in A^{2m}$ such that
$$a_1+\dots+a_m=b_1+\dots+b_m.$$ For $m=2$, we simply write $E(A)$ for $E_4(A)$ and we call it  the additive energy of  set $A.$

We define the span of a finite set $X$ by
$$\Span(X)=\Big\{\sum_{x\in X}\e_xx: \e_x\in \{-1,0,1\} \text{ for all } x\in X\Big\}.$$
The dimension $\dim(A)$ of  set $A$ is the minimal size of  set $X$ such that $A\sbeq \Span (X).$ The
following theorem proven in \cite{sanders-shkredov} (see also \cite{schoen-shkredov-dim} and \cite{sy}) provides an upper bound on the dimension of a set in terms of its additive doubling $K=|A+A|/|A|.$

\begin{theorem}\label{t:dimension}$\text{\cite{sanders-shkredov}}$ Suppose that  $|A+A|= K|A|$. Then $\dim (A)\ll K\log |A|.$
\end{theorem}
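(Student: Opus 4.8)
The plan is to prove the statement via an iterative argument that repeatedly extracts a single element whose span captures a positive proportion of $A$, together with a controlled loss in the doubling parameter at each step. Throughout, I would work with the Freiman-type structural information coming from $|A+A| = K|A|$, so that by Plünnecke--Ruzsa type inequalities one has $|hA - \ell A| \ll K^{h+\ell}|A|$ for all fixed $h,\ell$; this uniform control on small sumsets is what makes the iteration possible.

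First I would set up the core lemma: given any finite set $B$ with $|B+B| \le L|B|$ that is not too small, there exists a nonzero $x$ such that $|B \cap (B+x)|$ is comparable to $|B|/L$ and, more importantly, a large fraction of $B$ lies in $\Span(\{x\}) + B'$ for a set $B'$ of size roughly $|B|/2$ with doubling still $O(L^{O(1)})$. The idea is that the difference set $B - B$ has size $\le L|B|$ by Plünnecke, so by a pigeonhole / dyadic pushing argument one can find a popular difference $x$, and then the set of pairs $(b, b')$ with $b - b' \in \langle x \rangle$ (the subgroup-like progression generated by $x$) covers many elements; splitting $B$ along cosets of this progression and taking the largest piece gives $B'$. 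Iterating this $O(\log|A|)$ times, each step halving the size, and collecting all the extracted elements $x_1, x_2, \dots$ yields a set $X$ with $A \subseteq \Span(X)$.

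The key quantitative bookkeeping is to show $|X| \ll K\log|A|$ rather than $|X| \ll K (\log|A|)^{O(1)}$, which means the doubling constant must \emph{not} be allowed to blow up geometrically over the $\log|A|$ steps; instead one wants each step to add only $O(K)$ generators while keeping the effective doubling bounded by an absolute power of $K$ throughout. I would achieve this by not passing to a subset at every stage but rather by a Bogolyubov--Ruzsa style argument: inside $2A - 2A$ one finds a large (generalized arithmetic progression, or coset progression) structure $P$ of rank $\ll K$ and size $\gg |A|$, so that $A$ is covered by $\ll 1$ translates of something of the form $\Span(X_0) + P$ with $|X_0| \ll K$, and then one only needs to recurse on the much smaller "overhead" that $P$ fails to capture — or, alternatively, iterate the covering lemma $A \subseteq (A+A-A-a)$ combined with the fact that $A+A-A-a$ has bounded doubling, so that each application of Ruzsa covering adds $\ll K$ elements to $X$ and genuinely halves the residual set, terminating after $\ll \log|A|$ rounds.

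The main obstacle I anticipate is precisely this: controlling the accumulation of the doubling constant across iterations so the final bound is \emph{linear} in $K$. A naive iteration where $B'$ inherits doubling $L^{C}$ from $B$'s doubling $L$ gives a tower and only yields $\dim(A) \ll K^{O(\log\log|A|)}\log|A|$ or worse. The resolution — which is the technical heart of the Sanders--Shkredov argument — is to work with a \emph{single} fixed ambient structure (a coset progression of rank $\ll K$ obtained once and for all from Bogolyubov--Ruzsa applied to $A$) and measure everything against it, so that the "doubling" never degrades; the recursion is then on a combinatorial quantity (like the proportion of $A$ not yet spanned) that strictly and geometrically decreases, with each step paying only an additive $O(K)$ in generators. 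Once this framework is in place, summing $O(\log|A|)$ contributions of size $O(K)$ gives $\dim(A) \ll K\log|A|$ as claimed.
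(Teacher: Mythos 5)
Your proposal takes a genuinely different --- and much heavier --- route than the argument behind the cited result, and as sketched it has a gap at precisely the step you flag as the ``technical heart.''

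The proof in the cited source (Sanders, \emph{On a theorem of Shkredov}; see also Schoen--Shkredov, \emph{Additive dimension and a theorem of Sanders}) is a short, one-shot computation with no iteration, no covering lemma, and no Bogolyubov--Ruzsa. Take $\Lambda\subseteq A$ a maximal dissociated subset of $A$; maximality forces $A\subseteq\Span(\Lambda)$, so $\dim(A)\le|\Lambda|=:d$. For any $1\le k\le d$ the $\binom{d}{k}$ subset sums $\sum_{\lambda\in S}\lambda$ over $k$-element $S\subseteq\Lambda$ are pairwise distinct (a coincidence would be a nontrivial $\{-1,0,1\}$-relation on $\Lambda$) and all lie in $kA$. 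Pl\"unnecke--Ruzsa gives $|kA|\le K^k|A|$, hence $\binom{d}{k}\le K^k|A|$. Since $\binom{d}{k}\ge(d/k)^k$, this yields $d\le kK|A|^{1/k}$, and choosing $k=\lceil\log|A|\rceil$ makes $|A|^{1/k}=O(1)$, giving $d\ll K\log|A|$. That is the whole argument.

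Your scheme wants to iterate, adding $O(K)$ generators per round over $O(\log|A|)$ rounds, but the bookkeeping does not close. Ruzsa covering of $A$ by translates of a coset progression $P\subseteq 2A-2A$ of size comparable to $|A|$ costs roughly $|A+P|/|P|$ translates, and $|A+P|\le|3A-2A|\ll K^5|A|$ by Pl\"unnecke, so each such covering round contributes $K^{O(1)}$ generators rather than $O(K)$; the final bound then degrades to $K^{O(1)}$, which is useless here since the application has $K$ polylogarithmic in $|A|$ and the exponent $1$ on $K$ is exactly what is needed. The fix you gesture at --- a single fixed ambient coset progression against which doubling ``never degrades'' --- is essentially Freiman--Ruzsa, a far deeper input that still does not obviously recover the linear-in-$K$ coefficient. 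The key idea you are missing is that dissociativity converts a cardinality bound on the single sumset $kA$ directly into a bound on $\dim(A)$ via the growth of $\binom{d}{k}$; no structural decomposition of $A$ and no recursion on a residual set is required.
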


We are going to use a  sophisticated
concept of Bohr sets-a fundamental tool introduced to modern additive combinatorics by Bourgain \cite{bourgain-1/2}.

Let $G=\Z/N\Z$ be a cyclic group and let us denote the group
of its characters by $\h G\backsimeq\Z/N\Z$. We define the Bohr set
with a generating set $\G\sbeq\h G$ and a radius  $\g\in(0,\frac{1}{2}]$
to be the set
$$
B(\G,\g)
  =  \big\{ x\in \Z/N\Z: \ \|{tx}/{N}\|\le\g \text { for all } t\in\G\big\}\,.$$
 Here $\left\Vert \cdot\right\Vert $
denotes the distance to the integers, i.e. $\left\Vert x\right\Vert =\min_{y\in\Z}|x-y|$
for $x\in\mathbb{R}$. Given $\eta>0$ and a
Bohr set $B=B(\G,\g)$, by $B_{\eta}$ we mean the Bohr set $B(\G,\eta\g).$
The two lemmas below
are pretty standard, hence we refer the reader to \cite{tao-vu}
for a  complete account. The size of $\G$ is called the rank of $B$ and we denote it by $\rk(B).$
\begin{lemma}\label{l:bohr-size}
 For every $\g\in(0,\frac{1}{2}]$ we have
$$
\g^{|\G|}N\le|B(\G,\g)|\le 8^{|\G|+1}|B(\G,\g/2)|\,.
$$
\end{lemma}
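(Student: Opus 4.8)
The plan is to treat the two inequalities separately through the homomorphism $\phi\colon\zn\to(\mathbb{R}/\mathbb{Z})^{|\G|}$, $\phi(x)=(tx/N)_{t\in\G}$, so that by definition $B(\G,\g)=\phi^{-1}(I_\g^{|\G|})$, where $I_\g=\{u\in\mathbb{R}/\mathbb{Z}:\|u\|\le\g\}$ is an arc of length $2\g$ centred at $0$ (a genuine arc, since $\g\le\tfrac12$). The only elementary facts I shall use are the subadditivity of $\|\cdot\|$ and the trivial observation that two points lying in a common arc of length $\ell\le1$ on $\mathbb{R}/\mathbb{Z}$ differ by an element of norm at most $\ell$.

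For the lower bound I would run a continuous averaging argument. For $\th\in(\mathbb{R}/\mathbb{Z})^{|\G|}$ put $B_\th=\{x\in\zn:\phi(x)\in\th+[0,\g]^{|\G|}\}$. Since a fixed point of the torus lies in a uniformly random translate of the box $[0,\g]^{|\G|}$ with probability exactly $\g^{|\G|}$, integrating over $\th$ gives $\int_{(\mathbb{R}/\mathbb{Z})^{|\G|}}|B_\th|\,d\th=\g^{|\G|}N$, so some $\th_0$ satisfies $|B_{\th_0}|\ge\g^{|\G|}N$. If $x,y\in B_{\th_0}$ then $\phi(x)-\phi(y)$ lies coordinatewise in an arc of length $\g$, hence $\phi(x)-\phi(y)\in I_\g^{|\G|}$ and so $x-y\in B(\G,\g)$; fixing any $x_0\in B_{\th_0}$ (which exists unless $\g^{|\G|}N<1$, in which case the bound is trivial because $0\in B(\G,\g)$) we get $B_{\th_0}-x_0\sbeq B(\G,\g)$, and therefore $|B(\G,\g)|\ge|B_{\th_0}|\ge\g^{|\G|}N$.

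For the upper bound I would use a covering argument. Because $\g\le\tfrac12$, the arc $I_\g$ (length $2\g$) is covered by at most $8$ arcs of the form $c+I_{\g/4}$ (each of length $\g/2$), so $I_\g^{|\G|}$ is covered by at most $8^{|\G|}$ boxes $c+I_{\g/4}^{|\G|}$ with $c\in(\mathbb{R}/\mathbb{Z})^{|\G|}$. Pulling these back, $B(\G,\g)$ is covered by the at most $8^{|\G|}$ sets $\phi^{-1}(c+I_{\g/4}^{|\G|})$, and for each nonempty one, after choosing a point $x_c$ in it, any other point $x$ of it satisfies $\phi(x)-\phi(x_c)\in I_{\g/4}^{|\G|}-I_{\g/4}^{|\G|}\sbeq I_{\g/2}^{|\G|}$ by subadditivity, i.e.\ $x-x_c\in B(\G,\g/2)$; hence $\phi^{-1}(c+I_{\g/4}^{|\G|})\sbeq x_c+B(\G,\g/2)$. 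Summing over the at most $8^{|\G|}$ pieces yields $|B(\G,\g)|\le 8^{|\G|}|B(\G,\g/2)|\le 8^{|\G|+1}|B(\G,\g/2)|$.

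There is no serious obstacle here — this is the standard Bohr-set estimate — and the only points requiring care are precisely the ones a naive version gets wrong. Using a \emph{continuous} shift, rather than a fixed grid of $\lceil 1/\g\rceil^{|\G|}$ congruent boxes, is what produces the clean exponent $\g^{|\G|}$ instead of the $(\g/2)^{|\G|}$ a crude pigeonhole would give; and since $\phi$ need not be injective, one must phrase both halves in terms of translates of $B(\G,\g)$ and $B(\G,\g/2)$ rather than in terms of the map itself. The torus wraparound is harmless exactly because $\g\le\tfrac12$ keeps every arc in play shorter than the whole circle, so $\|\cdot\|$ behaves additively on them.
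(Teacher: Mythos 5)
Your argument is correct. The paper does not actually prove this lemma: it declares it ``pretty standard'' and refers the reader to Tao--Vu, so there is no in-paper proof to compare against, but your argument is essentially the standard one found there. The continuous averaging over translated boxes $\th+[0,\g]^{|\G|}$ gives the lower bound via the triangle inequality for $\|\cdot\|$, and the covering of $I_\g^{|\G|}$ by translates of $I_{\g/4}^{|\G|}$, followed by pulling back and re-centering at a representative point $x_c$, gives the upper bound; both the need to phrase things via translates (since $\phi$ is not injective) and the role of $\g\le\tfrac12$ in keeping arcs proper are handled correctly. Two small remarks: your covering argument actually yields the sharper $|B(\G,\g)|\le 8^{|\G|}|B(\G,\g/2)|$ (and four arcs per coordinate would already give $4^{|\G|}$, the constant in Tao--Vu), so the extra factor of $8$ in the lemma's statement is just slack; and when you write ``fixing any $x_0\in B_{\th_0}$,'' it would be marginally cleaner to observe directly that $|B_{\th_0}|\ge\g^{|\G|}N$ together with $B_{\th_0}-x_0\sbeq B(\G,\g)$ disposes of both cases at once, since if $B_{\th_0}=\emptyset$ then $\g^{|\G|}N\le 0<1\le|B(\G,\g)|$ trivially.
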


Bohr sets do not always behave like convex bodies. The size of Bohr
sets can vary significantly even for small changes of the radius
which was the motivation behind  the following definition.

We call a Bohr set $B(\G,\g)$ \emph{regular} if for every $\eta$,
with $|\eta|\le1/(100|\G|)$ we have
$$
(1-100|\G||\eta|)|B|\le|B_{1+\eta}|\le(1+100|\G||\eta|)|B|.
$$
Bourgain \cite{bourgain-1/2} showed that regular Bohr sets are ubiquitous.
\begin{lemma}\label{l:bohr-regular}
\label{lem:regularity} For every Bohr set $B(\G,\g),$ there exists
$\g'$ such that $\frac{1}{2}\g\le\g'\le\g$ and $B(\G,\g')$ is
regular.
\end{lemma}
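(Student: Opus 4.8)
The plan is to exhibit a regular radius by a counting/pigeonhole argument on the function $\g'\mapsto \log|B(\G,\g')|$. First I would introduce the monotone function $f(t)=\log|B(\G,\g e^{t})|$ for $t\le 0$; it is non-decreasing, and by Lemma \ref{l:bohr-size} (applied with radii $\g'$ and $\g'/2$) its total variation over any interval of length $\log 2$ is at most $O(|\G|)$. Hence over the dyadic window $t\in[-\log 2,0]$ the increments of $f$ are controlled, and a standard averaging shows that for most $t$ in a suitable subinterval the ``local Lipschitz constant'' of $f$ at scale $1/(100|\G|)$ is $O(|\G|)$. Translating back through the exponential change of variables, this yields a radius $\g'$ with $\tfrac12\g\le\g'\le\g$ such that for all $|\eta|\le 1/(100|\G|)$,
$$
|B(\G,\g'(1+\eta))|\le e^{O(|\G\|\eta|)}|B(\G,\g')|\,,
$$
and symmetrically a lower bound of the same shape; choosing the implied constants correctly turns $e^{O(|\G\|\eta|)}$ into $1\pm 100|\G\|\eta|$, which is exactly regularity.

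The key steps, in order, would be: (i) set up $f(t)=\log|B(\G,\g e^t)|$ and record monotonicity; (ii) use Lemma \ref{l:bohr-size} to bound $f(0)-f(-\log 2)\ll |\G|$; (iii) partition $[-\log 2,0]$ into $\asymp |\G|$ subintervals of length $\asymp 1/|\G|$ and apply pigeonhole to find one subinterval $I$ on which $f$ grows by $O(1)$; (iv) inside $I$, show that every point $t^\ast$ can be taken as the centre so that $f$ varies by $O(|\G\|\eta|)$ on $[t^\ast-\eta_0,t^\ast+\eta_0]$ for $\eta_0\asymp 1/|\G|$, using monotonicity to transfer the average bound to a pointwise two-sided bound; (v) set $\g'=\g e^{t^\ast}$ and unwind the estimates into the multiplicative form in the definition of regularity, checking that $\g'\in[\tfrac12\g,\g]$.

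The main obstacle is step (iv): monotonicity alone gives an average bound on $f$ over $I$, but regularity demands a two-sided pointwise bound near a single radius. The trick is that on a short interval a non-decreasing function with small total increase cannot oscillate, so the upper bound $|B_{1+\eta}|\le(1+100|\G\|\eta|)|B|$ follows from $f(t^\ast+\eta_0)-f(t^\ast)$ being small, while the lower bound $|B_{1-\eta}|\ge(1-100|\G\|\eta|)|B|$ follows from $f(t^\ast)-f(t^\ast-\eta_0)$ being small; both are guaranteed as long as the subinterval $I$ is chosen to carry only an $O(1)$ share of the total variation and $t^\ast$ is not too close to its endpoints, which one arranges by passing to the middle third of $I$. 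I expect the remaining bookkeeping — propagating the constant $100$ and the factor $1/(100|\G|)$ — to be routine, and I would simply refer to \cite{tao-vu} or \cite{bourgain-1/2} for the precise constants rather than optimising them here.
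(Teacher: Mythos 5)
The paper itself gives no proof of this lemma (it refers to Tao--Vu), so the relevant comparison is with the standard argument there, which is a Vitali-covering (Hardy--Littlewood maximal function) argument, not a one-scale pigeonhole.

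Your outline has a genuine gap at step (iv). Pigeonhole gives you a subinterval $I$ of length $\asymp 1/|\G|$ on which $f(t)=\log|B(\G,\g e^t)|$ increases by only $O(1)$, and monotonicity then lets you say $f(t^\ast+\eta)-f(t^\ast)=O(1)$ for every $\eta$ with $t^\ast+\eta\in I$. But regularity demands the much stronger scale-by-scale bound $f(t^\ast+\eta)-f(t^\ast)\ll |\G|\,|\eta|$ for \emph{every} $|\eta|\le 1/(100|\G|)$; when $\eta$ is much smaller than $1/|\G|$, the right-hand side is $\ll 1$, while your argument only yields $O(1)$. Monotonicity rules out oscillation, as you say, but it does not rule out concentration: a non-decreasing $f$ can jump by $\Theta(1)$ at a single interior point of $I$, and then no $t^\ast$ near that point (indeed no $t^\ast$ in the middle third, if the jump is there) satisfies the required Lipschitz bound at small scales. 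The phrase ``every point $t^\ast$ can be taken as the centre'' is therefore false as stated, and passing to the middle third does not repair it. The correct mechanism is to argue by contradiction: if no radius in the dyadic range were regular, then around each $t$ one finds an interval of some length $h_t$ on which $f$ increases by more than $100|\G|h_t$; extracting a disjoint Vitali subfamily covering a fixed fraction of $[-\log 2,0]$ and summing the increments contradicts the bound $f(0)-f(-\log 2)\ll |\G|$ from Lemma \ref{l:bohr-size}. That covering step, which controls all scales simultaneously rather than one, is the idea your proposal is missing.
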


The last lemma of this section presents a standard $L^2$ density increment technique introduced by Heath-Brown \cite{heath-brown} and Szemer\'edi \cite{szemeredi-3ap}, see also \cite{pintz-steiger-szemeredi}.
A proof of the lemma below can be found in either of the following papers \cite{sanders-3/4}, \cite{sanders-1} and \cite{bloom}.

\begin{lemma}
\label{l:l2-increment} Let $A\sbeq \Z/N\Z$ be a set with density
$\d.$ Let $\G\sbeq \zn$ and $\nu\ge 0$ be such that
$$\sum_{r\in \G\setminus \{0\}}|\hA(r)|^{2}\ge \nu|A|^{2}\,.$$
Then there is a regular Bohr set $B$  with  $\rk
({B})=\dim(\G)$ and radius $\Omega((\dim(\G))^{-1})$ such that
$$|(A+t)\cap B|\ge(1+\Omega(\nu))\d |B|$$
 for some $t$.
\end{lemma}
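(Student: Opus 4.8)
The plan is to convert the Fourier $L^2$-mass on $\G$ into a genuine density increment on a Bohr set built from a small set of generators that spans $\G$. First I would invoke Theorem~\ref{t:dimension}-type reasoning implicitly: by definition of $\dim(\G)$ there is a set $X$ with $|X|=\dim(\G)$ and $\G\sbeq\Span(X)$. Using Lemma~\ref{l:bohr-regular} I pick a regular Bohr set $B=B(X,\g)$ with $\g=\Omega(|X|^{-1})$, so that $\rk(B)=\dim(\G)$ and the radius is as claimed; the point of choosing the generating set to be $X$ rather than $\G$ is precisely that every $r\in\G$ is an integer combination $\sum_{x\in X}\e_x x$ with $\e_x\in\{-1,0,1\}$, hence $\|r y/N\|\le |X|\g$ is small for $y\in B$, i.e. the characters $e^{2\pi i r y/N}$ are all nearly constant on $B$.

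Next I would run the standard argument. Consider the balanced function $f=1_A-\d 1_{\zn}$ and look at the quantity $\sum_{r}|\hA(r)|^2\h{1_B}(r)\overline{\h{1_B}(r)}$ type expression, or more directly expand $\sum_{y}(1_A*1_{-B})(y)^2$ and compare with its average. Concretely, by Parseval applied to $1_A* 1_{-B}$,
$$\sum_{y}(1_A*1_{-B})(y)^2=\frac1N\sum_{r}|\hA(r)|^2|\h{1_B}(r)|^2\ge \frac{|A|^2|B|^2}{N}+\frac1N\sum_{r\in\G\set\{0\}}|\hA(r)|^2|\h{1_B}(r)|^2.$$
For $r\in\G$ the regularity of $B$ together with the smallness of $\|r y/N\|$ on $B$ gives $|\h{1_B}(r)|\ge(1-O(|X|\g))|B|=\Omega(|B|)$ once the implied constant in $\g=\Omega(|X|^{-1})$ is chosen small enough; hence the second term is $\gg \nu|A|^2|B|^2/N$. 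Therefore
$$\sum_{y}(1_A*1_{-B})(y)^2\ge(1+\Omega(\nu))\frac{|A|^2|B|^2}{N}.$$
On the other hand $\sum_y (1_A*1_{-B})(y)=|A||B|$ and $(1_A*1_{-B})(y)$ is supported on a set of size at most $|A+B|\le N$; a Cauchy–Schwarz / second-moment argument (or simply the fact that a nonnegative function whose mean is $m$ and mean-square is $(1+c)m^2$ must exceed $(1+\Omega(c))m$ somewhere it is nonzero) then yields a point $y$ with $(1_A*1_{-B})(y)\ge(1+\Omega(\nu))\d|B|$. Writing $y=a-b$ and shifting, $|(A+t)\cap B|\ge(1+\Omega(\nu))\d|B|$ for a suitable $t$, as required.

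The main obstacle, and the only place requiring care, is the passage from "$L^2$ is large" to "pointwise large on the support": one must control the size of the support of $1_A*1_{-B}$, which is $|A-B|\le N$, but more importantly one needs the lower bound on the mean-square to dominate the correct multiple of (mean)$^2/(\text{support size})$. This is handled by noting $\sum_y(1_A*1_{-B})(y)=|A||B|$ so the average of $1_A*1_{-B}$ over its effective range is $\d|B|$ up to the relevant normalisation, and then the elementary inequality: if $g\ge0$, $\sum g = S$, $\sum g^2\ge(1+c)S^2/M$ where $M=|\mathrm{supp}(g)|$, then $\max g\ge (1+\Omega(c))S/M$ — indeed otherwise $\sum g^2\le \max g\cdot\sum g<(1+\Omega(c))S^2/M$. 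Everything else (the Bohr-set size estimates, regularity, the reduction to generators of the span) is routine and quotable from the lemmas above and from \cite{sanders-3/4,sanders-1,bloom}.
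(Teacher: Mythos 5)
Your proof is correct and is essentially the standard Heath-Brown--Szemer\'edi $L^2$-increment argument; the paper itself does not prove Lemma~\ref{l:l2-increment} but cites \cite{sanders-3/4,sanders-1,bloom}, and the argument you give (pass to a generating set $X$ of $\Span$-dimension $\dim(\G)$, build a regular Bohr set on $X$ with radius $\Omega(|X|^{-1})$ so that $|\h{1_B}(r)|\gg|B|$ for all $r\in\G$, expand $\sum_y(1_A*1_{-B})(y)^2$ by Parseval, and pass from second moment to maximum via $\sum g^2\le(\max g)\sum g$) is precisely the one found in those references. The only wrinkle is in your final ``second-moment to pointwise'' step: the quantity you actually control is $\sum g^2\ge(1+\Omega(\nu))|A|^2|B|^2/N$, so the clean deduction is $\max g\ge\sum g^2/\sum g\ge(1+\Omega(\nu))|A||B|/N=(1+\Omega(\nu))\d|B|$, and the detour through the support size $M=|A-B|$ is unnecessary and slightly misleading (you proved a lower bound with $N$ in the denominator, not $M$, and $M\le N$ goes the wrong way for the inequality as you phrased it); the parenthetical version using the mean and mean-square over all of $\Z/N\Z$ is the correct formulation and suffices.
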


Throughout the paper we assume that  set $A$ does not contain any non-trivial arithmetic progression of length three and that $N$ is a large number.

\section
{Sketch of the argument}\label{s:sketch}

We apply a widely used  density increment argument introduced by
Roth \cite{roth}, however we use it  in a rather non-standard way.
In the first step, we increase the density by a  large factor of the form
$(\log(1/\d))^{1-o(1)}$  on some low-rank Bohr
set.  Then we apply the iterative method of Bloom  to our
new set with larger density  to obtain the desired bound.

The general strategy can be roughly described as follows. Let $A\sbeq [N]$ be a set with density $\d$ without arithmetic progressions of length three then it is known that $1_A$ must have large  Fourier coefficients. To obtain a density increment  we would have to find a small set $\La$ such that   $\Span (\La)$ has  large intersection with the spectrum $\D_\d.$ The size of $\La$ is equal to the rank of a Bohr set, on which we will increase  density, and   density increment
(given by the $L^2$ method) equals
$$(1+\Omega(\d^2|\Span(\La)\cap \D_\d|))\d.$$
If we want to obtain the density increment by  factor $\Omega(L)$ for some function $L\rightarrow \infty$, we have to locate
 $\La$ of size $O(\d^{-1+c}), c>0$ such that
\begin{equation}\label{span-spec}|\Span(\La)\cap \D_\d|
\gg L\d^{-2}\,.
\end{equation}
The main problem is that by of Bateman-Katz structural result (see Theorem \ref{t:bateman-katz-structure}) there are sets with spectrum such that the described set $\La$ does not exist.
Hence one needs to combine the above method with some  new ideas.

In order to obtain the density increment we will consider three separate cases with respect to the size of Fourier coefficients of $1_A$ that in a sense dominate in $\D_{\d^{1+\m}}$ for a some small constant $\m>0.$ If 
the contribution of middle size or small Fourier coefficients is large we follow  the method introduced by Bateman and Katz \cite{bateman-katz}. We consider essentially two subcases
 according to the additive behavior  of the large spectrum $\D$. Following \cite{bateman-katz}, we call the cases smoothing and nonsmoothing, respectively. If the higher energy $E_8(\D)$ is much bigger than
 one can deduce from the H\"older inequality applied to $E(\D)$ (smoothing case), then based on the Bateman-Katz argument we can indeed find a small set $\La$ satisfying (\ref{span-spec}). The nonsmoothing case is more delicate.
 In that case we use a seminal result of Bateman and Katz
\cite{bateman-katz,bateman-katz-nonsmooth} that describes  the
structure of the spectrum in the nonsmoothing case and it turns out
that again we can also find a small set $\La$ satisfying
(\ref{span-spec}), apart from one situation where roughly $\D\approx
X+H\,,|\D|\sim\d^{-3+O(\m)}\,, |X|\sim \d^{-2+O(\m)}\,, |H|\sim \d^{-1+O(\m)}$ and $H$
is a highly structured set. This case is considered separately in
Lemma \ref{l:increment-3-hard} which is an important  part of our
argument. We  show that either the density can  be increased  on a Bohr
set generated by $H$ (such a Bohr set has a very low rank) or $X$
contains additive substructure which again leads to a density
increment on a low-rank Bohr set.
The above argument does not apply when $\D_{\d^{1+\m}}$ is dominated by large  Fourier coefficients. Then assuming that there are very few smaller  Fourier coefficients in $\D_{\d^{1+\m}}$, using a different technique based on Fourier  approximation method, we prove that $A$ does indeed have density increment on a low-rank Bohr set.

\section{Middle size Fourier coefficients}
\label{s:large-coeff}
Assume that $A\sbeq \{1,\dots,N'\}$ does not contain any non-trivial arithmetic progressions of length three. Let $N$ be any prime number satisfying $2N'<N\le 4N'.$ We embed $A$ in $\zn$ in a natural way and observe that $A$ also does not  contain any non-trivial arithmetic progression of length $3$ in $\zn.$ Let us recall a standard argument that shows that $1_A$ must have large Fourier coefficients. The number of three-term arithmetic progressions in $A$ (including trivial ones) is expressed by the sum $\frac1N\sum_{r=0}^{N-1}\hA(r)^2\hA(-2r)$, whence we have
$$\frac1N\sum_{r=0}^{N-1}\hA(r)^2\hA(-2r)=|A|\,.$$
Clearly, we can assume that $|A|>\sqrt{2 N}$, so  by the H\"older inequality
 $$\sum_{r\not=0}|\hA(r)|^3\ge |A|^3-N|A|\ge \frac12|A|^3\,.$$
Since
$$\sum_{r\not \in \D_{\d/4}(A)}|\hA(r)|^3\le \frac14\d |A|\sum_{r=0}^{N-1}|\hA(r)|^2=\frac14|A|^3$$
it follows that
$$\sum_{r \in \D_{\d/4}\setminus \{0\}}|\hA(r)|^3\ge \frac14|A|^3\,,$$
hence there are non-trivial Fourier coefficients with $|\hA(r)|\gg \d|A|.$

However to obtain a large density increment we have to control Fourier coefficients below typical treshold $\d|A|.$
We will consider three separate cases: 
\begin{equation}\label{mid}
\sum_{r:\, \d^{1-\m} |A|\le |\hA(r)|\le \d^{1/10} |A|}|\hA(r)|^3\ge \frac1{10}\d^{\m/5}|A|^3\,,
\end{equation}
\begin{equation}\label{sml}
\sum_{r:\, \d^{1+\m} |A|\le |\hA(r)|\le \d^{1-\m} |A|}|\hA(r)|^3\ge \frac1{10}\d^{\m/5}|A|^3\,,
\end{equation}
and the last one if \eqref{mid} and \eqref{sml} do not hold, where $\m$ is a small positive constant. Throughout the paper we assume that $\d^{\m/20}<\log^{-1}(1/\d)$
since we know that $\d\rightarrow 0$ as $N\rightarrow\infty.$

By dyadic argument, we obtain
\begin{equation}\label{in:l3-fourier}
\sum_{r:\, \theta |A|\le |\hA(r)|\le 2\theta |A|}|\hA(r)|^3\ge  \frac1{10}\d^{\m/5}|A|^3\log^{-1}(1/\d)
\end{equation}
for some $\d^{1-\m}\le \theta \le \d^{1/10}$, so 
\begin{equation}\label{in:spectrum-size}
|\D_\th|\gg \theta^{-3}\d^{\m/5}\log^{-1}(1/\d)\ge \theta^{-3}\d^{\m/4}\,.
\end{equation}

In this section we consider the first case \eqref{mid}.
We will apply  the Bateman-Katz-Bloom lemma, see Lemma 5.3 in \cite{bateman-katz} and  Theorem 4.1 in \cite{bloom} (a slightly weaker version of Lemma \ref{l:bloom} can be easily deduced from Lemma \ref{l:bateman-katz-span}). 

\begin{lemma}\label{l:bloom}
Let $A\sbeq \Z/N\Z$ be a set with density $\d,$ and let $\D$ be a subset of  $\D_\th$. Then there exists a set $\D'\sbeq \D$ such that $|\D'|\gg \th |\D|$ and
$\dim(\D')\ll\th^{-1}\log(1/\d).$
  \end{lemma}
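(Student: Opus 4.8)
The plan is to iteratively extract a large, low-dimensional piece of $\D\sbeq\D_\th$ by a greedy procedure: at each stage, either the current set already has small dimension, or there is a large Fourier coefficient of its indicator function which lets us pass to a substantially smaller "level set'' inside $\Span$ of a small set. More precisely, set $\D^{(0)}=\D$. Given $\D^{(i)}$ with $|\D^{(i)}|\ge \frac12\th|\D|$, consider the function $f_i=1_{\D^{(i)}}$ on $\zn$. If $\dim(\D^{(i)})\ll \th^{-1}\log(1/\d)$ we stop and take $\D'=\D^{(i)}$. Otherwise we want to produce a small set $X_i$ and a subset $\D^{(i+1)}\sbeq\D^{(i)}\cap \Span(X_i)$ still of size $\ge\frac12\th|\D|$ but with appreciably smaller additive doubling, so that Theorem~\ref{t:dimension} eventually forces the dimension down.

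The key input is the relation between $\D$ and $A$: since $\D\sbeq\D_\th(A)$, every $r\in\D$ satisfies $|\hA(r)|\ge\th|A|$, and by Parseval $\sum_{r\in\D}|\hA(r)|^2\le |A|N$, so $|\D|\le\th^{-2}\d^{-1}$. The real content, though, is a Rudin--Shapiro / Chang-type statement: if $S\sbeq\D_\th(A)$ then $S$ is contained in $\Span(X)$ for a set $X$ of size $O(\th^{-2}\log(1/\d))$ — this is the classical Chang spectral lemma. That already gives $\dim(S)\ll\th^{-2}\log(1/\d)$ for \emph{all} of $\D_\th$, which is off by a factor $\th^{-1}$. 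To gain the extra $\th$ one argues probabilistically: a random subset $\D'$ obtained by keeping each element of $\D$ with probability $\sim\th$ has expected size $\th|\D|$, and one shows that with positive probability its \emph{additive} structure collapses enough that Chang's lemma, applied now with the relevant energy/size parameters of $\D'$ rather than of $A$, yields $\dim(\D')\ll\th^{-1}\log(1/\d)$. Equivalently, one can run the iteration above, at each step using a large Fourier coefficient of $1_{\D^{(i)}}$ (which exists unless $\D^{(i)}$ is already additively unstructured) to refine $\D^{(i)}$ to a dilate/level set sitting in $\Span$ of one more generator while losing only a constant fraction of the mass; after $O(\log(1/\d))$ steps the accumulated generators number $O(\th^{-1}\log(1/\d))$ and the remaining set still has size $\gg\th|\D|$.

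Concretely, I would carry out the following steps in order. First, record the size bound $|\D|\le\th^{-2}\d^{-1}$ from Parseval. Second, state the sampling: pick $\D''\sbeq\D$ by including each $r$ independently with probability $p=c\th$; then $\mathbb{E}|\D''|=p|\D|$ and, by a second-moment or Chernoff bound, $|\D''|\ge\frac12 p|\D|$ with probability $\ge 3/4$. Third — the heart of the matter — show that with probability $\ge 3/4$ the set $\D''$ admits a spanning set of size $O(\th^{-1}\log(1/\d))$; here one applies Chang's lemma to $\D''$ but exploits that, after sampling, the quantity playing the role of $1/\mu$ in Chang is governed by $|\D''|\le p|\D|\le c\th^{-1}\d^{-1}$ together with the ambient density $\d$, so the $\log$ factor is $\log(1/\d)$ and the main factor is $\th^{-1}$ rather than $\th^{-2}$. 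Fourth, combine: a set of the required size and dimension survives, so set $\D'=\D''$, and by a union bound both events hold simultaneously. Finally, absorb constants to get $|\D'|\gg\th|\D|$ and $\dim(\D')\ll\th^{-1}\log(1/\d)$.

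The main obstacle is precisely the gain of the factor $\th$ over the naive Chang bound: one has to verify that passing to a $\th$-random subset genuinely improves the relevant spectral parameter, i.e. that the energy or multiplicity controlling Chang's lemma for $\D''$ scales like $\th^{-1}$ and not $\th^{-2}$, and that this happens with good probability rather than merely in expectation. This requires tracking how the large-spectrum membership $|\hA(r)|\ge\th|A|$ interacts with the random restriction — essentially the Bateman--Katz observation that the large spectrum, after sampling, behaves like a set of size $\th^{-1}\d^{-1}$ inside which Chang's lemma is nearly sharp. Handling the fluctuation of $|\D''|$ and of its additive energy simultaneously (so that both the size lower bound and the dimension upper bound hold for the \emph{same} sample) is the delicate bookkeeping; everything else is a routine application of Parseval and Chang's lemma.
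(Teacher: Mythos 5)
The paper itself gives no proof of Lemma~\ref{l:bloom}: it cites Lemma~5.3 of Bateman--Katz and Theorem~4.1 of Bloom, noting in addition that a slightly weaker version (off by $\log^{3/2}(1/\d)$ factors) follows by combining Lemma~\ref{l:bateman-katz-span} with Shkredov's energy lower bound. Your proposal correctly identifies the high-level strategy used in those references --- sample $\D$ at rate $\sim\th$ and argue that, with positive probability, the sample simultaneously retains $\gg\th|\D|$ elements and has small dimension --- and that is genuinely the Bateman--Katz device. However, the step you yourself call ``the heart of the matter'' is not carried out, and the route you sketch for it does not work as stated. Applying Chang's lemma to the sampled set $\D''$ cannot deliver the gain: Chang's bound for any subset of $\D_\th(A)$ limits every dissociated subset to size $O(\th^{-2}\log(1/\d))$, a quantity determined solely by the threshold $\th$ and the density $\d$ of $A$, not by the cardinality $|\D''|$. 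Passing to a subset of size $\th^{-1}\d^{-1}$ does not improve that bound, so the appeal to ``the quantity playing the role of $1/\mu$ being governed by $|\D''|$'' gives nothing, and likewise the ``greedy one-generator-per-step'' alternative is stated without any mechanism forcing the remaining set to shrink in the right way.

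The actual mechanism, visible in the paper's Lemma~\ref{l:bateman-katz-span}, is a random-spanning argument run through the high-order energies $E_{2s}(\D)$ with $s=\lfloor\log|\D|\rfloor$: Shkredov's bound $E_{2s}(\D)\gs\th^{2s}\d|\D|^{2s}$ gives $\k\gs\th^{2s}\d$, hence $\k^{-1/2s}\ls\th^{-1}\d^{-1/2s}\ll\th^{-1}$ because $s\gg\log(1/\d)$, and it is precisely this $1/(2s)$-power extraction --- not Chang's lemma applied to a smaller set --- that converts the naive $\th^{-2}$ into $\th^{-1}$. Without an argument of that nature your proposal has a genuine gap at its central step, even though the framing (random sampling at rate $\th$, with a size/dimension trade-off) is the right one.
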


\begin{lemma}\label{l:increment-1}
     Let $A\subseteq \Z/N\Z$ be a set with density $\d,$  and suppose that (\ref{in:spectrum-size})  holds for some
        $\d^{1-\m}\le \theta \le \d^{1/10}$.  Then  there is a regular Bohr set $B$ with $\rk(B)\ll \d^{-1+\m/3}$  and  radius $\Omega( \d^{1-\m/3})$ such that for some $t$
    $$|(A+t)\cap B|\gg \d^{1-\m/4}|B|.$$
\end{lemma}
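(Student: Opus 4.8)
The plan is to combine the dimension bound from the Bateman–Katz–Bloom lemma (Lemma~\ref{l:bloom}) with the $L^2$ density increment lemma (Lemma~\ref{l:l2-increment}). First I would apply Lemma~\ref{l:bloom} with $\D=\D_\th$ itself: this produces a subset $\D'\sbeq\D_\th$ with $|\D'|\gg\th|\D_\th|$ and $\dim(\D')\ll\th^{-1}\log(1/\d)$. Using the size lower bound \eqref{in:spectrum-size}, $|\D_\th|\gg\th^{-3}\d^{\m/4}$, we get $|\D'|\gg\th^{-2}\d^{\m/4}$. Since every $r\in\D'\sbeq\D_\th$ satisfies $|\hA(r)|\ge\th|A|$, we obtain
$$\sum_{r\in\D'\setminus\{0\}}|\hA(r)|^2\ge (\th|A|)^2|\D'| \gg \th^2|A|^2\cdot\th^{-2}\d^{\m/4}=\d^{\m/4}|A|^2,$$
(being slightly careful that removing $r=0$ costs at most one term, which is negligible given the size of $|\D'|$). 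So the hypothesis of Lemma~\ref{l:l2-increment} holds with the generating set $\D'$ and $\nu\gg\d^{\m/4}$.

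Next I would feed this into Lemma~\ref{l:l2-increment}. That lemma yields a regular Bohr set $B$ with $\rk(B)=\dim(\D')\ll\th^{-1}\log(1/\d)$ and radius $\Omega((\dim(\D'))^{-1})=\Omega(\th\log^{-1}(1/\d))$, on which $|(A+t)\cap B|\ge(1+\Omega(\d^{\m/4}))\d|B|$ for some $t$. At this point the increment factor is only $(1+\Omega(\d^{\m/4}))$, so to reach the claimed conclusion $|(A+t)\cap B|\gg\d^{1-\m/4}|B|$ one must iterate: repeatedly apply the same argument to the translated-and-restricted set, each time replacing $N$ by $|B|$ and $A$ by $(A+t)\cap B$, as long as the density stays below $\d^{1-\m/4}$ (so that the analogue of \eqref{in:spectrum-size} is still available — one needs the hypothesis to be "inherited" by the restricted set, which is the standard transference: a dense subset of a regular Bohr set has a large spectrum relative to that Bohr set). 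Each step multiplies the density by $1+\Omega(\d^{\m/4})$, so after $O(\d^{-\m/4}\log(1/\d))$ steps the density passes $\d^{1-\m/4}$. Summing the rank increments, $\rk(B)\ll \d^{-\m/4}\log(1/\d)\cdot\th^{-1}\log(1/\d)\le \d^{-\m/4}\cdot\d^{-1+\m}\cdot\log^2(1/\d)\ll\d^{-1+\m/3}$ using $\th\ge\d^{1-\m}$ and $\log^2(1/\d)\le\d^{-\m/20}$; similarly the radius contracts by a factor that is bounded below by $\Omega(\d^{1-\m/3})$.

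The main obstacle, and the step requiring the most care, is the iteration bookkeeping: one must verify that at every stage the density increment hypothesis \eqref{in:spectrum-size} is genuinely available for the restricted set, i.e. that a set of density $\d'\in[\d,\d^{1-\m/4}]$ inside a regular Bohr set $B$ of not-too-large rank still has $|\D_{\th'}|\gg(\th')^{-3}(\d')^{\m/4}$ for a suitable $\th'$. This is the usual Bohr-set analogue of the opening computation of Section~\ref{s:large-coeff} (counting three-term progressions weighted by $1_B$), and it forces the radius of $B$ at each stage to be chosen regular and not too small relative to its rank so that the error terms in the progression count are controlled; one also has to track that the ambient size $|B|$ does not shrink below $\mathrm{poly}(1/\d)$, which is guaranteed by Lemma~\ref{l:bohr-size} since the total rank stays $\ll\d^{-1+\m/3}$ and the radius stays $\Omega(\d^{1-\m/3})$. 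Once these inheritance facts are in place the rank and radius estimates follow by the geometric-series accounting sketched above, and absorbing the polylogarithmic factors into the $\d^{\pm\m/3}$ slack (using the running assumption $\d^{\m/20}<\log^{-1}(1/\d)$) gives exactly the stated bounds.
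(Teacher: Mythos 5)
There is a genuine gap. The key trick in the paper is to iterate \emph{Lemma~\ref{l:bloom}}, not the density increment: one peels off $k=\Theta(\d^{-\m/2})$ \emph{disjoint} subsets $\D_1,\dots,\D_k\sbeq\D_\th$, each with $|\D_i|=\Theta(\th|\D_\th|)$ and $\dim(\D_i)\ll\th^{-1}\log(1/\d)$, and feeds the union $\G=\bigcup_i\D_i$ into Lemma~\ref{l:l2-increment}. Since the pieces are disjoint, $|\G|\gg\d^{-\m/2}\th|\D_\th|\gg\d^{-\m/4}\th^{-2}$, so $\nu\gg\th^2|\G|\gg\d^{-\m/4}$ is \emph{large}, yielding the full multiplicative gain $\d^{-\m/4}$ in a single application of the $L^2$ increment lemma, while $\dim(\G)\le\sum_i\dim(\D_i)\ll\d^{-\m/2}\th^{-1}\log(1/\d)\ll\d^{-1+\m/3}$. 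You instead take only one piece $\D'$, obtaining $\nu\gg\d^{\m/4}$, which is \emph{small} (a factor $1+o(1)$, not $\d^{-\m/4}$), and then try to recoup the loss by iterating the density increment itself. That fallback cannot be made to work.

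First, the hypothesis \eqref{in:spectrum-size} is a consequence of the case assumption \eqref{mid} in the trichotomy; it is not a stable property of progression-free sets and there is no reason the restricted set $(A+t)\cap B$ must again fall into case \eqref{mid} rather than \eqref{sml} or the ``large coefficient'' regime. You flag this as something ``requiring the most care,'' but it is not a bookkeeping issue --- it is an obstruction, since Lemma~\ref{l:increment-1} is designed to be invoked once inside the three-way split, not iterated on its own. Second, your radius estimate is wrong: each pass through Lemma~\ref{l:l2-increment} contracts the radius by a multiplicative factor $\Omega(\th\log^{-1}(1/\d))\sim\d^{1-\m}/\log(1/\d)$, so after the $\Theta(\d^{-\m/4}\log(1/\d))$ iterations you propose the surviving radius is of order $\exp\bigl(-\Theta(\d^{-\m/4}\log^2(1/\d))\bigr)$, which is far smaller than the polynomial bound $\Omega(\d^{1-\m/3})$ required by the lemma. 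The single-shot union of disjoint spectrum pieces is precisely what lets the paper control both rank and radius.
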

\begin{proof}
By Lemma \ref{l:bloom} there exists a set
$\D_1\sbeq \D_\th$ such that $|\D_1|=\Theta (\th |\D_\th|)$ and   
$$\dim(\D_1)\ll\theta^{-1}\log(1/\d)\,.$$
By iterative
application of Lemma \ref{l:bloom},
 we see that there are
disjoint sets   $\D_1,\dots, \D_k\sbeq \D_\th$, for
$k=\Theta(\d^{-\m/2})$ such that $|\D_i|=\Theta(\theta
|\D_\th|)$  and 
$$\dim (\D_i)\ll \theta^{-1}\log(1/\d)$$
 for
every $1\le i\le k.$ Put $\G=\bigcup_{i=1}^k \D_i\sbeq \D_\th$ then
by (\ref{in:spectrum-size}) we have
$$|\G|\gg\d^{-\m/2}\theta^{-2}\d^{\m/4}\gg \d^{-\m/4}\theta^{-2}$$ and
$$\dim(\G)\ll\d^{-\m/2}\th^{-1}\log(1/\d)\ll \d^{-1+\m/2}\log(1/\d)\ll \d^{-1+\m/3}\,.$$
Therefore, by Lemma
\ref{l:l2-increment} a shift of the set $A$ has density at least
$$(1+\Omega(\th^2|\G|))\d\gg \d^{-1+\m/3}$$
on a regular  Bohr
 set with  rank $O(\d^{-1+\m/3})$ and   radius $\Omega( \d^{1-\m/3})$.$\hfill\Box$
\end{proof}

\bigskip

\section{Additively smoothing spectrum}\label{s:smoothig}

In sections \ref{s:smoothig} and \ref{s:nonsmoothing} we obtain a density increment provided that \eqref{sml} holds. Hence for some $\d^{1+\m}\le \th\le \d^{1-\m}$ we have
$$\sum_{r:\, \th |A|\le |\hA(r)|\le 2\th |A|}|\hA(r)|^3\gg |A|^3\log^{-1}(1/\d)\,.$$
Thus
\begin{equation}\label{in:spectrum-size-1}
|\D_{\th}|\ge  \d^{\m/5}\th^{-3}\log^{-1} (1/\d)\ge \d^{2\m}\th^{-2}\d^{-1}\,,
\end{equation}
so the size of $\D_{\th}$ is close to the maximal possible value.

A well-known theorem of Shkredov \cite{shkredov-1, shkredov-2} states that for every $\D\sbeq \D_\th$ and $m\in \N$ we have
$$E_{2m}(\D)\ge \th^{2m}\d|\D|^{2m}.$$
Observe that by the Parseval formula $|\D_\th|\le \th^{-2}\d^{-1}$,
so if we additionally assume that $|\D_\th|\gg \th^{-2}\d^{-1},$
then the H\"older inequality implies that for $m\ge 3$
$$E_{2m}(\D_\th)\ge E(\D_\th)^{m-1} |\D_\th|^{m-2}\gg_m \th^{2m}\d|\D_\th|^{2m}\,,$$
which  essentially meets Shkredov's bound.
This observation  motivates  the next definition introduced by Bateman and Katz. We say that a spectrum $\D_\th$ is $\s$-additively smoothing (or simply additively smoothing if $\s$ is indicated) if
$$E_{8}(\D_\th)\ge \d^{-\s}\th^{8}\d^{}|\D_\th|^{8}.$$
Otherwise, we say that the spectrum $\D_\th$ is $\s$-additively nonsmoothing. In this section, we will obtain a density increment for  additively smoothing spectrum.

The following lemma, proven in \cite{schoen-shkredov-dim} (see Corollary 7.5)  is an abelian group version of  Bateman-Katz Lemma 5.3. The proof of this result requires some modifications,
but similarly as in Bloom's Theorem 4.1 in \cite{bloom} it  relies on a probabilistic argument of Bateman and Katz.

\begin{lemma}\label{l:bateman-katz-span}
    Let $\D\subseteq \zn$ be a  set such that $E_{2s}(\D)=\k |\D|^{2s}\ge 10^s s^{2s}|\D|^s,$ where  $2\le s=\lfloor \log |\D|\rfloor$.  Then there exists a set $\La\sbeq \D$ such that
  $|\La|\ll\k^{-1/2s}\log^{3/2} |\D|$ and
    \begin{equation*}\label{f:d_bound1}
     |\Span (\La)\cap \D|\gg \k^{1/2s}|\D|\log ^{-3/2}|\D|\,.
    \end{equation*}
\label{l:bateman-katz}
\end{lemma}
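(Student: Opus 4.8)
The plan is to prove Lemma~\ref{l:bateman-katz-span} by following the probabilistic approach of Bateman and Katz, adapted to the abelian group setting as in \cite{schoen-shkredov-dim}. The key idea is that if the higher energy $E_{2s}(\D)=\k|\D|^{2s}$ is large, then a random subset of $\D$ of an appropriate size will, with positive probability, have most of its mass concentrated in the span of a small number of its elements. First I would introduce the normalized counting measure on $\D$ and consider the additive convolution structure: writing $f=1_\D$, the quantity $E_{2s}(\D)$ counts $2s$-tuples summing to zero, equivalently $\|f^{(*s)}\|_2^2$ where $f^{(*s)}$ is the $s$-fold convolution. The hypothesis $\k|\D|^{2s}\ge 10^s s^{2s}|\D|^s$ says $\k\ge (10s^2/|\D|)^s$, which after taking $s=\lfloor\log|\D|\rfloor$-th roots gives $\k^{1/2s}\gg s/|\D|^{1/2}$ up to constants, the regime where the argument has teeth.

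The main steps would be as follows. Second, I would select $\La_0$ to be a random subset of $\D$ obtained by including each element independently (or by taking a random ordered $s$-tuple), with $|\La_0|$ on the order of $\k^{-1/2s}\log^{3/2}|\D|$; the point of this size is that the expected number of ``collisions'' — tuples of elements of $\La_0$ with a prescribed sum relation — is controlled by $\k$. Third, I would run a dyadic/level-set decomposition of the convolution $f^{(*s)}$ according to the size of its values, and use the large value of $E_{2s}$ to locate a level set $P$ of popular sums: there must exist a value $t$ achieved by $\gg \k|\D|^{2s}/(\text{range})$ many $s$-tuples, or more robustly a set of such $t$'s carrying a constant proportion of the energy. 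Fourth, the probabilistic heart: condition on the random set $\La_0$ and estimate, via a first-moment (Markov) argument, that a typical element of $\D$ lies in $\Span(\La_0)$ with probability $\gg \k^{1/2s}\log^{-3/2}|\D|$; this uses that each popular $s$-fold sum relation $d = \pm\lambda_1\pm\dots\pm\lambda_j$ with $\lambda_i\in\La_0$ certifies membership of $d$ in $\Span(\La_0)$, and the number of such certificates is governed precisely by $E_{2s}$. Averaging then produces one fixed $\La\subseteq\D$ with $|\La|\ll\k^{-1/2s}\log^{3/2}|\D|$ and $|\Span(\La)\cap\D|\gg\k^{1/2s}|\D|\log^{-3/2}|\D|$, which is exactly the claim.

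The hard part will be the bookkeeping in the fourth step: one must show that the random inclusion of order $\k^{-1/2s}$ many elements really does capture a $\k^{1/2s}$-proportion of $\D$ inside its span, and this requires carefully matching the exponents. The $\log^{3/2}|\D|$ factors arise from (i) the dyadic decomposition of the convolution values (one $\log$) and (ii) concentration/union-bound losses when passing from expectation to a single good outcome, together with the choice $s\sim\log|\D|$ which converts $s^{O(1)}$ into $\log^{O(1)}|\D|$. A secondary subtlety is ensuring the selected elements of $\La_0$ are \emph{distinct} and that the sum relations genuinely use elements of $\La_0$ rather than degenerating — this is where the hypothesis $\k|\D|^{2s}\ge 10^s s^{2s}|\D|^s$ is used, as it guarantees the ``diagonal'' contribution to $E_{2s}(\D)$ (tuples with coincidences) is a lower-order term, so the popular sums are witnessed by honest $2s$-tuples of distinct elements. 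Since the statement is quoted from \cite{schoen-shkredov-dim} (Corollary 7.5) and ultimately from the Bateman--Katz probabilistic lemma, I would present the above as the structure of the argument and refer to those sources for the detailed probabilistic estimates rather than reproducing them in full.
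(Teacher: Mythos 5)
Your proposal matches the paper's own treatment of this lemma: the paper does not prove Lemma~\ref{l:bateman-katz-span} but instead cites it as Corollary 7.5 of Schoen--Shkredov, noting that it is the abelian-group adaptation of Bateman--Katz Lemma 5.3 and that the proof ``relies on a probabilistic argument of Bateman and Katz'' (as in Bloom's Theorem 4.1), which is exactly what you say. Your sketch of that underlying argument --- random sampling of $\La_0\sbeq\D$ of size $\approx\k^{-1/2s}\log^{3/2}|\D|$, a first-moment count of span-membership certificates driven by the popular level sets of the $s$-fold convolution, with the hypothesis $\k|\D|^{2s}\ge 10^s s^{2s}|\D|^s$ controlling the degenerate/diagonal tuples --- is a faithful outline of that cited argument, and like the paper you defer to the sources for the quantitative bookkeeping, so there is no gap to report.
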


\begin{lemma}\label{l:increment-2}
    Let $A\subseteq \Z/N\Z$,  $|A|=\d N$  and suppose
        that for some $\d^{1+\m}\le \th\le \d^{1-\m}$ we have $E_8(\D_\th)\ge \d^{-20\m}\th^{8}\d^{}|\D_\th|^{8}.$ Then  there is a regular Bohr set $B$ with rank  $\rk(B)\ll \d^{-1+\m/2}$ and radius
    $\Omega(\d^{1-\m/2})$ such that  for some $t$
    $$|(A+t)\cap B|\gg \d^{1-\m/2}|B|.$$
\end{lemma}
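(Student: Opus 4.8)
The plan is to apply Lemma~\ref{l:bateman-katz-span} once, to the set $\D:=\D_\th$ with $s:=\lfloor\log|\D_\th|\rfloor$, using the smoothing hypothesis to get a strong lower bound on $\k:=E_{2s}(\D_\th)/|\D_\th|^{2s}$. The resulting set $\La$ will satisfy $|\La|\ll\d^{-1+\m/2}$ and $\Span(\La)$ will meet $\D_\th$ in a set large enough that applying the $L^2$-increment Lemma~\ref{l:l2-increment} to $\G:=\Span(\La)\cap\D_\th$ finishes the proof. First I record the size of $D:=|\D_\th|$: Parseval gives $D\le\th^{-2}\d^{-1}$, and, since this lemma is applied in the setting of case~\eqref{sml}, we may use \eqref{in:spectrum-size-1}, in particular $D\ge\d^{\m/5}\th^{-3}\log^{-1}(1/\d)$; together with $\d^{1+\m}\le\th\le\d^{1-\m}$ these give $\d^{-3+4\m}\le D\le\d^{-3-2\m}$, so $s$ is of order $\log(1/\d)$, and any factor $\d^{\pm c/s}$ or $D^{\pm c/s}$ with $c=O(1)$ is $\Omega(1)$.

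For the energy bootstrap, I apply H\"older's inequality on the Fourier side to $1_{\D_\th}$: interpolating the exponent $8$ between $2s$ and $2$ and using $\sum_r|\h{1_{\D_\th}}(r)|^2=DN$ yields $E_8(\D_\th)\le E_{2s}(\D_\th)^{3/(s-1)}D^{(s-4)/(s-1)}$, hence $E_{2s}(\D_\th)\ge E_8(\D_\th)^{(s-1)/3}D^{-(s-4)/3}$. Substituting the smoothing bound $E_8(\D_\th)\ge\d^{-20\m}\th^8\d D^8$ gives $\k\ge(\d^{-20\m}\th^8\d)^{(s-1)/3}D^{(s-4)/3}$, and inserting $D\ge\d^{\m/5}\th^{-3}\log^{-1}(1/\d)$ and absorbing the $\Omega(1)$ factors produces an estimate of the shape $\k^{1/2s}\gg\th^{5/6}\d^{1/6-O(\m)}$. (Independently, Shkredov's bound $E_{2s}(\D_\th)\ge\th^{2s}\d D^{2s}$ gives $\k\ge\th^{2s}\d$, which, with $D\ge\d^{-3+4\m}$ and $s$ of order $\log(1/\d)$, verifies the hypothesis $E_{2s}(\D_\th)=\k D^{2s}\ge 10^s s^{2s}D^s$ of Lemma~\ref{l:bateman-katz-span}.)

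Now Lemma~\ref{l:bateman-katz-span} produces $\La\sbeq\D_\th$ with $|\La|\ll\k^{-1/2s}\log^{3/2}D$ and $|\Span(\La)\cap\D_\th|\gg\k^{1/2s}D\log^{-3/2}D$. Combining the lower bound on $\k^{1/2s}$, the hypothesis $\th\ge\d^{1+\m}$, and the inequality $\log^{O(1)}(1/\d)<\d^{-O(\m)}$ (which follows from $\d^{\m/20}<\log^{-1}(1/\d)$), I get $|\La|\ll\d^{-1+c\m}$ for an absolute constant $c$; since the smoothing hypothesis carries the ample factor $\d^{-20\m}$, one has $c\ge1/2$, so $|\La|\ll\d^{-1+\m/2}$. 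Put $\G:=\Span(\La)\cap\D_\th$, so $\dim(\G)\le|\La|\ll\d^{-1+\m/2}$; since $|\hA(r)|\ge\th|A|$ for every $r\in\G$,
$$\sum_{r\in\G\set\{0\}}|\hA(r)|^2 \ge \tfrac12\th^2|\G|\,|A|^2 \gg \th^2\k^{1/2s}D\log^{-3/2}(1/\d)|A|^2 =: \nu|A|^2.$$
Inserting once more $D\ge\d^{\m/5}\th^{-3}\log^{-1}(1/\d)$, the lower bound on $\k^{1/2s}$, and $\th\le\d^{1-\m}$, one checks $\nu\gg\d^{-\m/2}$. Lemma~\ref{l:l2-increment} then yields a regular Bohr set $B$ with $\rk(B)=\dim(\G)\ll\d^{-1+\m/2}$ and radius $\Omega((\dim(\G))^{-1})=\Omega(\d^{1-\m/2})$ such that $|(A+t)\cap B|\ge(1+\Omega(\nu))\d|B|\gg\d^{1-\m/2}|B|$ for some $t$, as claimed.

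The main obstacle is the exponent bookkeeping in the second step: the $\d^{-20\m}$ gain in $E_8$ must be propagated through the H\"older interpolation and the extraction of $\k^{1/2s}$ precisely enough that the $\d$-exponent in $|\La|$ ends up strictly above $-1$ (so the Bohr set has rank $\d^{-1+\Omega(\m)}$, as Lemma~\ref{l:l2-increment} needs), while at the same time $\nu$ stays a genuine positive power of $\d^{-1}$ so that the density is increased by $\d^{-\Omega(\m)}$. What makes both estimates go through is the fact that $D$ is almost as large as $\th^{-3}$ (via~\eqref{in:spectrum-size-1}) rather than just $\ge\th^{-2}\d^{-1}$; thanks to this extra room, a single application of Lemma~\ref{l:bateman-katz-span} suffices, with no iteration over disjoint spectral pieces (in contrast to Lemma~\ref{l:increment-1}).
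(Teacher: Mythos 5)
Your proof is correct and follows essentially the same approach as the paper: one application of the Bateman--Katz span lemma to $\D_\th$ with $s=\lfloor\log|\D_\th|\rfloor$, bootstrapping the $E_8$ smoothing hypothesis to a lower bound on $E_{2s}$ via the same H\"older interpolation $E_{2s}\ge E_8^{(s-1)/3}|\D_\th|^{-(s-4)/3}$, and then feeding $\G=\Span(\La)\cap\D_\th$ (the paper uses $\G=\Span(\La)$, which is inessentially different) into the $L^2$-increment lemma. The only cosmetic differences are that you verify the hypothesis of Lemma~\ref{l:bateman-katz-span} directly from Shkredov's bound $E_{2s}\ge\th^{2s}\d|\D_\th|^{2s}$ rather than from the already-derived energy estimate, and you keep the factor $D^{1/6}$ a bit longer before substituting the two lower bounds from \eqref{in:spectrum-size-1}; both lead to the same exponent bookkeeping and conclusion.
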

\begin{proof} Put  $s=\lfloor \log |\D_\th|\rfloor$.
Using the H\"older inequality and (\ref{in:spectrum-size}) we have
\begin{eqnarray*}
E_{2s}(\D_\th)&\ge& E_8(\D_\th)^{\frac{s-1}{3}}|\D_\th|^{-\frac{s-4}{3}}\ge \d^{O(1)}\d^{\frac13(1-20\m)s}\th^{\frac83s}|\D_\th|^{\frac73s}\\
&\ge& \d^{-\frac13(18\m+o(1))s} \th^{2s}|\D_\th|^{2s}\ge \d^{-4\m s} \th^{2s}|\D_\th|^{2s}\,,
\end{eqnarray*}
provided that $N$ is large enough. Notice that (\ref{in:spectrum-size-1}) implies  that $E_{2s}(\D_\th)\gg 10^{s} s^{2s}
|\D_\th|^s,$  so we can apply
 Lemma \ref{l:bateman-katz-span}. Thus, there exists a set $\La$ such that
$$|\La|\ll \d^{2\m}\th^{-1}\log^{3/2}(1/\d)\ll \d^{-1+\m/2}$$
and
$$ |\Span (\La)\cap \D_\th|\gg \d^{-2\m}\th \log^{-3/2}(1/\d)|\D_\th|\gg \d^{-2\m}\th^{-2}\log^{-5/2}(1/\d)\gg \d^{-\m/2}\th^{-2}\,.$$
Now it is enough to use  Lemma \ref{l:l2-increment} with $\G=\Span(\La)$ to get the required result.$\hfill\Box$

\end{proof}

\section{Additively nonsmoothing spectrum}
\label{s:nonsmoothing}

In this section, we will obtain a density increment in a more
difficult case, when the spectrum $\D_\th$ is an additively
nonsmoothing set. Recall that  for some $\d^{1+\m}\le \th\le \d^{1-\m}$ we have
$$
|\D_{\th}|\ge \d^{2\m}\th^{-2}\d^{-1}\,.
$$
 Bateman and Katz
\cite{bateman-katz,bateman-katz-nonsmooth} proved the following
fundamental result characterizing  the structure of additively
nonsmoothing sets.

\begin{theorem}\label{t:bateman-katz-structure}
Let $\tau>0$ be a fixed number. There exists a function $f=f_\tau:(0,1)\rightarrow (0,\infty)$ with $f(x)\rightarrow 0$ as $x\rightarrow 0$ such that the following holds. Let $\D$ be a symmetric set of an abelian group and let $\s>0$. Assume that $E(\D')\gg |\D|^{2+\tau}$ for every $\D'\sbeq \D$ with $|\D'|\gg|\D|$ and that $E_8(\D)\le |\D|^{4+3\t+\s}.$ Then there exists $\a$, $0\le \a\le \frac{1-\t}{2}$, such that for $i=1,\dots,\lceil |\D|^{\a-f(\s)}\rceil$ there are sets $H_i, X_i$ and $\D_i\sbeq \D$ such that
\begin{eqnarray}\label{bk1} |H_i|&\ll& |\D|^{\t+\a+f(\s)},\\
\label{bk2}|X_i|&\ll&  |\D|^{1-\t-2\a+f(\s)},\\
\label{bk3}|H_i+H_i|&\ll& |H_i|^{1+f(\s)},
\end{eqnarray}
and
\begin{eqnarray}\label{bk4}|(X_i+H_i)\cap \D_i|&\gg&  |\D|^{1-\a-f(\s)}.\end{eqnarray}
Furthermore, the sets $\D_i$ are pairwise disjoint.
\end{theorem}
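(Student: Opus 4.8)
The plan is to reproduce the argument of Bateman and Katz \cite{bateman-katz,bateman-katz-nonsmooth}, of which I indicate only the shape. Since $\Delta$ is symmetric, write $r=1_\Delta*1_\Delta$ for its representation function, so that $\sum_x r(x)=|\Delta|^2$, $E(\Delta)=\sum_x r(x)^2$ and $E_8(\Delta)=\sum_y(r*r)(y)^2$. First I would perform a dyadic reduction: applying the hypothesis $E(\Delta')\gg|\Delta|^{2+\tau}$ with $\Delta'=\Delta$ and pigeonholing over dyadic ranges of $r$, there is a level $T$ for which the set of $T$-popular sums $P=\{x:T\le r(x)<2T\}$ carries almost all of the energy, $|P|T^2\gg|\Delta|^{2+\tau}/\log|\Delta|$. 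Comparing with $\sum_x r(x)=|\Delta|^2$ forces $|P|T\le|\Delta|^2$, hence $T\gg|\Delta|^{\tau}/\log|\Delta|$ and $|P|\ll|\Delta|^{2-\tau}\log|\Delta|$.

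The second step is to turn the non-smoothing hypothesis into structural information about $P$. The key point is the pointwise bound $(r*r)(y)\ge\sum_{x:\,x\in P,\,y-x\in P}r(x)r(y-x)\gg T^2(1_P*1_P)(y)$; squaring, summing over $y$ and using the lower bound for $T$ gives $E_8(\Delta)\gg T^4E(P)$, and combining with $E_8(\Delta)\le|\Delta|^{4+3\tau+\sigma}$ yields $E(P)\ll|\Delta|^{\tau+h(\sigma)}|P|^2$ for some $h$ with $h(\sigma)\to0$; thus the popular sumset of $\Delta$ has small additive energy and is highly dissociated. Matching this against the extremal model $\Delta=X+H$ with $H$ an approximate subgroup and $X$ dissociated (where $P\approx X+X+H$, $T\approx|H|$, and balancing the sizes forces $|H|\approx|\Delta|^{\tau}$ and $|X|\approx|\Delta|^{1-\tau}$) makes it plausible that a non-smoothing $\Delta$ must be built from cosets of approximate groups and not from a spread-out generalised progression; Theorem~\ref{t:bateman-katz-structure} is the robust, iterated form of this statement.

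To extract the decomposition I would iterate the following. Apply a Balog--Szemer\'edi--Gowers-type argument to the ``popular sums'' bipartite graph on $\Delta$ (joining $a$ and $b$ when $a+b\in P$) to locate a large $\Delta_i\subseteq\Delta$ of controlled doubling, then use the Pl\"unnecke--Ruzsa inequalities together with a Ruzsa covering argument and a Freiman-type theorem to write $\Delta_i\subseteq X_i+H_i$ satisfying \eqref{bk1}, \eqref{bk2} and \eqref{bk4}. Here the non-smoothing hypothesis, equivalently the smallness of $E(P)$, is exactly what forces $H_i$ to be an honest approximate subgroup with $|H_i+H_i|\ll|H_i|^{1+f(\sigma)}$, as in \eqref{bk3}, rather than a higher-rank progression (which would smooth the convolutions of $1_\Delta$ and inflate $E_8$). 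Since $E(\Delta')\gg|\Delta|^{2+\tau}$ holds for \emph{every} large $\Delta'$, after deleting $\Delta_i$ the remaining set still has full energy, so the extraction can be repeated, and the disjointness of the $\Delta_i$ is automatic; the number of rounds until what is left becomes negligible is $\lceil|\Delta|^{\alpha-f(\sigma)}\rceil$, where $\alpha\in[0,(1-\tau)/2]$ is the exponent forced by balancing $|P|$, $T$, $|X_i|$ and $|H_i|$, and in practice one pigeonholes over the admissible values of $\alpha$.

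The step I expect to be the main obstacle is the one just highlighted: showing that the pieces $H_i$ are approximate subgroups with doubling $1+f(\sigma)$ rather than merely sets of bounded doubling. A Freiman-type theorem on its own gives only a coset progression of rank $O(\log K)$, which is far too crude, and one genuinely needs the non-smoothing hypothesis, propagated through several layers of convolution with precision $|\Delta|^{f(\sigma)}$, to collapse that rank. Carrying this out, and absorbing all the accumulated $o(1)$-losses (logarithmic factors, covering overheads, iteration depth) into the single error function $f=f_\tau(\sigma)\to0$, is the technical heart of \cite{bateman-katz,bateman-katz-nonsmooth}.
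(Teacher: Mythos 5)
The paper itself does not prove Theorem~\ref{t:bateman-katz-structure}: it is quoted as a result of Bateman and Katz, cited from \cite{bateman-katz,bateman-katz-nonsmooth}, and used as a black box in Section~\ref{s:nonsmoothing}. So there is no in-paper argument to compare yours against, and any ``proof'' would have to reproduce the external one.

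As a sketch of the Bateman--Katz argument, your account is reasonable at the level of outline, and the opening computation checks out. With $r=1_\Delta*1_\Delta$, dyadic pigeonholing produces a level $T=|\Delta|^{\tau+\beta}$ (some $\beta\ge0$) and a popular-sum set $P$ with $|P|T^2\gg|\Delta|^{2+\tau}/\log|\Delta|$; the pointwise bound $(r*r)(y)\ge T^2(1_P*1_P)(y)$ gives $E_8(\Delta)\ge T^4E(P)$; and, usefully, the free parameter $\beta$ cancels in the quotient, yielding $E(P)/|P|^2\ll|\Delta|^{\tau+\sigma}\log^{O(1)}|\Delta|$ regardless of which dyadic scale was selected. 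You also correctly observe that the hypothesis ``$E(\Delta')\gg|\Delta|^{2+\tau}$ for every large $\Delta'$'' is exactly what licenses the iteration over disjoint pieces $\Delta_i$, and that $\alpha$ in the statement tracks the residual freedom in the scale $T$ (and hence in $|H_i|,|X_i|$), which one handles by a further pigeonhole.

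However, as you say yourself, this is not a proof, and the step you flag as the main obstacle is indeed where the proposal stops being an argument. Passing from ``$P$ has small additive energy'' to the very specific conclusion \eqref{bk1}--\eqref{bk4}, with $|H_i+H_i|\ll|H_i|^{1+f(\sigma)}$ and the matching exponents $\tau+\alpha$ and $1-\tau-2\alpha$ on $|H_i|$ and $|X_i|$, cannot be done by the generic pipeline you gesture at (Balog--Szemer\'edi--Gowers, then Pl\"unnecke--Ruzsa, then a Freiman-type theorem plus Ruzsa covering): those tools lose polynomial factors and produce coset progressions of rank $O(\log K)$, not near-subgroups of doubling $1+o(1)$, and they would not by themselves deliver the $\lceil|\Delta|^{\alpha-f(\sigma)}\rceil$ disjoint pieces. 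The actual argument in \cite{bateman-katz-nonsmooth} is a bespoke, multi-layered analysis of $P$ and of iterated convolutions, designed precisely because the off-the-shelf machinery is too lossy here. In short: your sketch is an honest and largely accurate roadmap, but the proof of the theorem remains entirely outsourced to the cited references --- which is also what the paper does.
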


We will apply Theorem \ref{t:bateman-katz-structure} to the set $\D_\th.$ By Shkredov's theorem  for every $\D\sbeq \D_\th$ with $|\D|\gg |\D_\th|$
we have
$$E(\D)\ge \th^4\d |\D|^4\gg \d^{10\m/3}\th^{2/3}\d^{-2/3}|\D|^{7/3}\ge \d^{4\m}|\D|^{7/3}\gg |\D_\th|^{7/3-2\m}\,.$$
On the other hand, by Lemma \ref{l:increment-2} we can assume that
$$E_8(\D_\th)\le \d^{-20\m}\th^{8}\d^{}|\D_\th|^{8}\le |\D_\th|^{5+10\m}\,.$$
Therefore we can apply Theorem \ref{t:bateman-katz-structure} with
$$ \t=1/3-2\m \text{~~ and ~~}  \s=16\m\,,$$
hence our spectrum $\D_\th$ has structure described in Theorem \ref{t:bateman-katz-structure}.

Throughout the paper assume that $f(\s)\ge \s$  and that $\m$ and $f=f(16\m)$
are small constants.

Each of the four inequalities given in Theorem
\ref{t:bateman-katz-structure}   is crucial in our approach. Note that from \eqref{bk1}, \eqref{bk2} and \eqref{bk4} we can deduce  lower bounds for the size of $H_i$
and $X_i.$ In order to
apply the last one, we will need the following simple, elementary
lemma.

\begin{lemma}\label{l:selection} Let $c,\e>0$ be such  that $|(X+H)\cap \D|\ge c |X||H|^{1-\e}$. Then there is a set $X'\sbeq X$ such that $|X'|\ge \frac{c}4|X||H|^{-\e}$ and for every $Y\sbeq X'$ we have
$|(Y+H)\cap \D|\ge \frac{c^2}{8}|Y||H|^{1-2\e}.$
\end{lemma}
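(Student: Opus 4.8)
The plan is to extract $X'$ from $X$ by an iterative cleaning procedure and to control how much the relevant covering shrinks along the way. Write $D=|(X+H)\cap\D|$, so that $D\ge c|X||H|^{1-\e}$ by hypothesis while trivially $D\le|X||H|$; in particular $c|H|^{-\e}\le 1$. Call a nonempty $W\sbeq X$ \emph{deficient} if $|(W+H)\cap\D|<\tfrac{c^2}{8}|W||H|^{1-2\e}$. Starting from $X_0=X$, as long as the current set $X_j$ contains a deficient subset $W_{j+1}$, set $X_{j+1}=X_j\set W_{j+1}$; since each step removes at least one point, the process stops after finitely many steps, say $m$, and we set $X'=X_m$.

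By construction $X'$ contains no deficient subset, and the empty set trivially satisfies the required inequality, so $|(Y+H)\cap\D|\ge\tfrac{c^2}{8}|Y||H|^{1-2\e}$ for every $Y\sbeq X'$; this is the second assertion. For the size bound, note that the removed sets $W_1,\dots,W_m$ are pairwise disjoint subsets of $X$, so $\sum_j|W_j|\le|X|$, and $X=X'\cup\bigcup_j W_j$ gives $(X+H)\cap\D\sbeq\big((X'+H)\cap\D\big)\cup\bigcup_j\big((W_j+H)\cap\D\big)$. Hence
$$D\le |(X'+H)\cap\D|+\sum_{j=1}^{m}|(W_j+H)\cap\D|<|(X'+H)\cap\D|+\frac{c^2}{8}|H|^{1-2\e}\sum_{j=1}^{m}|W_j|\le |(X'+H)\cap\D|+\frac{c^2}{8}|H|^{1-2\e}|X|\,.$$
Since $c|H|^{-\e}\le1$ we get $\tfrac{c^2}{8}|H|^{1-2\e}|X|\le\tfrac18 c|X||H|^{1-\e}\le\tfrac18 D$, so $|(X'+H)\cap\D|>\tfrac78 D\ge\tfrac78 c|X||H|^{1-\e}$. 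Finally $|(X'+H)\cap\D|\le|X'+H|\le|X'||H|$ yields $|X'|>\tfrac78 c|X||H|^{-\e}\ge\tfrac{c}{4}|X||H|^{-\e}$, as desired; when $m=0$ one has $X'=X$ and the bound is immediate from $c|H|^{-\e}\le1$.

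I expect no real obstacle here: the only delicate point is checking that the cumulative loss $\sum_j|(W_j+H)\cap\D|$ stays below a constant fraction of $D$, and this is precisely where the gap between the exponent $1-\e$ in the hypothesis and $1-2\e$ in the conclusion — through the inequality $c|H|^{-\e}\le1$ — is used. Everything else is bookkeeping with the trivial bound $|(X'+H)\cap\D|\le|X'||H|$ and the sub-additivity of $W\mapsto|(W+H)\cap\D|$ under unions.
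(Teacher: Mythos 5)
Your proof is correct, and it takes a genuinely different route from the paper's. The paper first passes to the subset $S\setminus P$ of $(X+H)\cap\D$ obtained by discarding the points covered more than $\tfrac{2}{c}|H|^{\e}$ times by translates $x+H$, then defines $X'$ as those $x\in X$ for which $x+H$ still hits at least $\tfrac{c}{4}|H|^{1-\e}$ points of $S\setminus P$; the multiplicity ceiling $\tfrac{2}{c}|H|^{\e}$ on $S\setminus P$ is what lets one convert the covering count $\sum_{y\in Y}|(y+H)\cap(S\setminus P)|$ into a cardinality lower bound for $(Y+H)\cap\D$. You instead run a greedy ``strip off deficient subsets'' iteration and show a posteriori that the total amount removed is small: the disjointness of the removed $W_j$ and the trivial inequality $c|H|^{-\e}\le 1$ keep $\sum_j|(W_j+H)\cap\D|$ below $\tfrac18 D$, after which $|(X'+H)\cap\D|>\tfrac78 D$ forces $|X'|$ to be large via $|X'+H|\le|X'||H|$. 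Your approach is an extremal/greedy argument that is arguably more transparent and gives a slightly stronger lower bound $|X'|>\tfrac78 c|X||H|^{-\e}$; the paper's approach is more explicit in that it identifies a concrete $X'$ by a one-shot threshold rather than appealing to a terminating process, and the intermediate set $S\setminus P$ has a clean interpretation. Both yield exactly the stated constants, and I see no gap in your argument.
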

\begin{proof} Put $S=(X+H)\cap \D$ and notice that
$$\sum_{t\in X+H}(1_X*1_H)(t)=|X||H|\,.$$
Let us denote by $P$ the set of elements $t$ with $(1_X*1_H)(t)\ge \frac2{c}|H|^{\e}.$ Clearly, $|P|\le \frac{c}2|X||H|^{1-\e}$ and therefore
$$\sum_{t\in S\setminus P}(1_X*1_H)(t)=\sum_{x\in X}|(x+H)\cap (S\setminus P)|\ge |S|-|P|\ge \frac{c}2 |X||H|^{1-\e}\,.$$
Let $X'$ be the set of all $x\in X$ satisfying the inequality
 $|(x+H)\cap (S\setminus P)|\ge \frac{c}4
|H|^{1-\e}.$ Observe that
$$\sum_{x\in X\setminus X'}|(x+H)\cap (S\setminus P)|\le \frac{c}4|H|^{1-\e}|X\setminus X'|\le \frac{c}4|X||H|^{1-\e}\,,$$
hence
$$|X'||H|\ge \sum_{x\in X'}|(x+H)\cap (S\setminus P)|\ge \frac{c}4|X||H|^{1-\e}\,.$$
Thus, $|X'|\ge \frac{c}4 |X||H|^{-\e}$ and  if $Y\sbeq X'$, then
$$|(Y+H)\cap \D|\ge \frac{\frac{c}4|Y||H|^{1-\e}}{\frac2{c}|H|^\e}\,,$$
which yields to the required inequality. $\hfill\Box$
\end{proof}

\bigskip

 The next
lemma  provides a density increment in a simpler case-in
Bateman-Katz theorem we have $\a\ge 20f.$

\begin{lemma}\label{l:increment-3-easy}
    Let $A\subseteq \Z/N\Z, |A|=\d N,$ and assume that  for every $\D'\sbeq \D$ with $|\D'|\gg|\D|$ we have $E(\D')\gg |\D|^{7/3-2\mu}$ and  
		$E_8(\D)\le |\D|^{5+10\mu}.$   Then  either there is a regular Bohr set $B$  with $\rk( B)\ll \d^{-1+f}$  and radius $\Omega (\d^{1-f})$ such that
    $$|(A+t)\cap B|\gg \d^{1-f}|B|$$
    for some $t$; or there are sets $H$ and $X$ such that $|H|\ll |\D_\th|^{1/3+21f}, |H+H|\ll |H|^{1+f}$, $|X|\ll  |\D_\th|^{2/3+2f},$ and
    $$|(X+H)\cap \D_\th|\gg |\D_\th|^{1-21f}.$$
\end{lemma}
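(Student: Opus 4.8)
The plan is to run Theorem \ref{t:bateman-katz-structure} with the parameters already fixed in the paper, namely $\t=1/3-2\m$ and $\s=16\m$, so that we obtain the exponent $\a$ with $0\le\a\le\frac{1-\t}{2}$ and the collection of roughly $|\D_\th|^{\a-f}$ pairwise disjoint triples $(H_i,X_i,\D_i)$ satisfying \eqref{bk1}--\eqref{bk4}. Since we are in the case $\a\ge 20f$, the number of triples is at least $|\D_\th|^{\a-f}\ge |\D_\th|^{19f}$, a genuinely large quantity. The dichotomy in the conclusion corresponds to whether $\a$ is \emph{small} (bounded above by a constant multiple of $f$, say $\a\le 20f$ in the opposite regime handled by the other lemma) — here we are told $\a\ge 20f$, so I expect the strategy to be: use the many disjoint $H_i$'s to build a single low-rank generating set $\G$ whose span captures a chunk of $\D_\th$ of size $\gg L\d^{-2}$ with $L=\d^{-\Omega(1)}$, then feed $\G$ into Lemma \ref{l:l2-increment}. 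Concretely, from \eqref{bk3} each $H_i$ has doubling $|H_i+H_i|\ll|H_i|^{1+f}\le |H_i|\cdot|\D_\th|^{f\cdot O(1)}$, so by Theorem \ref{t:dimension} (Sanders--Shkredov) we get $\dim(H_i)\ll |\D_\th|^{O(f)}\log|H_i|\ll\d^{O(f)}$ — a set of \emph{very} low dimension.

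The key construction: pick $\G$ to be the union of the dimension-witnessing sets $X^{(i)}$ for the $H_i$'s across $i=1,\dots,k$ with $k=\Theta(|\D_\th|^{\a-f})$, so that $H_i\sbeq\Span(\G)$ for all $i$, while $\dim(\G)\ll k\cdot\d^{O(f)}$. Then $\Span(\G)\supseteq\bigcup_i H_i$, and using \eqref{bk4} together with Lemma \ref{l:selection} (applied to each pair $(X_i,H_i)$ to pass to a well-distributed $X_i'$), one controls how much of each $\D_i$ lies in $X_i+H_i$ and hence how the $X_i$ pieces interact with the span of the $H_i$'s. The honest point is that $\Span(\G)$ only directly captures the $H_i$'s; to capture the $\D_i\sbeq\D_\th$ one also needs the $X_i$ contribution, and this is exactly where the two horns of the dichotomy split: \textbf{either} the $X_i$'s themselves have small dimension / enough additive structure (so $\Span(\G')$ for a slightly enlarged $\G'$ contains $X_i+H_i\supseteq\D_i$, and then $\sum_i|\D_i|\gg|\D_\th|\gg\d^{2\m}\th^{-2}\d^{-1}$ gives a big $L^2$ mass inside $\Span(\G')$, yielding the Bohr-set increment via Lemma \ref{l:l2-increment} with rank $\ll\d^{-1+f}$); \textbf{or} this fails, and then we are forced into the alternative conclusion, extracting a single pair $(H,X)$ (one of the $(H_i,X_i)$, or an aggregate) with $|H|\ll|\D_\th|^{1/3+21f}$, $|H+H|\ll|H|^{1+f}$, $|X|\ll|\D_\th|^{2/3+2f}$ and $|(X+H)\cap\D_\th|\gg|\D_\th|^{1-21f}$. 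The size bounds here come from plugging $\t=1/3-2\m$ and $\a\le\frac{1-\t}{2}=\frac{1}{3}+\m$ into \eqref{bk1}, \eqref{bk2}, \eqref{bk4}, absorbing all the $f(\s)$ and $\m$ error terms into the generous exponents $21f$ and $2f$ (recall $f\ge\s=16\m$, so $\m$ is dominated by $f$).

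I would carry out the steps in this order: (1) invoke Theorem \ref{t:bateman-katz-structure} and record the ranges of $\a$ and the inequalities \eqref{bk1}--\eqref{bk4} with the chosen parameters; (2) apply Theorem \ref{t:dimension} to each $H_i$ to get $\dim(H_i)\ll|\D_\th|^{O(f)}$ from its small doubling; (3) apply Lemma \ref{l:selection} to each $(X_i,H_i)$ to replace $X_i$ by a regularized subset $X_i'$ with good lower-bound behaviour on $|(Y+H_i)\cap\D_\th|$ for all $Y\sbeq X_i'$; (4) case split on whether a suitable union of the $X_i'$'s (or a dense subset of one of them) has dimension $\ll\d^{-1+f}$ — if yes, set $\G$ to be the union of the dimension witnesses for enough $H_i$'s and $X_i'$'s, check $\Span(\G)$ meets $\D_\th$ in mass $\gg\d^{-\Omega(1)}\th^{-2}$ using disjointness of the $\D_i$'s plus \eqref{bk4}, and conclude by Lemma \ref{l:l2-increment}; (5) if no, output the pair $(H,X)$ for the second alternative. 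The main obstacle I anticipate is step (4): correctly quantifying "the $X_i$'s do not have enough structure" so that its negation cleanly produces the second conclusion with the stated exponents, and making sure the dimension bound $\dim(\G)\ll\d^{-1+f}$ survives — i.e. that $k\cdot\d^{O(f)}\ll\d^{-1+f}$, which forces $k\ll\d^{-1+f-O(f)}$, hence a constraint relating $\a-f$ (the exponent of $k$ in terms of $|\D_\th|\approx\d^{-1-O(\m)}$) to the budget $\d^{-1}$; tracking that this is consistent precisely when $\a$ is not too large, and that the leftover case $\a$ close to its maximum $\frac13+\m$ is exactly the one where the $(H,X)$ alternative (with $|X|\ll|\D_\th|^{2/3+2f}$, the dominant regime) kicks in. Getting the bookkeeping of the $f$'s and $\m$'s to land inside the advertised $21f$ and $2f$ slack, while it is routine, is where care is needed.
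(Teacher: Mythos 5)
Your high-level reading of the lemma is right — Theorem~\ref{t:bateman-katz-structure} is applied with $\t=1/3-2\m$, $\s=16\m$, and the dichotomy in the conclusion is governed by the value of $\a$ — but the case structure you propose and, more importantly, the dichotomy you use to generate the two horns do not match the paper's argument, and I believe your version has a genuine gap. The paper splits into \emph{three} ranges of $\a$: if $1/3-20f\le\a\le 1/3+\m$, it unions $k=\lceil|\D_\th|^{\a-25f}\rceil$ of the sets $(X_i+H_i)\cap\D_i$ and bounds $\dim(\bigcup_i(X_i+H_i))\ll\sum_i|X_i|\dim(H_i)$, which is small because $|X_i|\ll|\D_\th|^{2/3+2\m-2\a+f}$ is tiny when $\a$ is near $1/3$ — no additive structure of the $X_i$ is invoked. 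If $20f\le\a\le 1/3-20f$, the $X_i$ can be large, but the paper uses Lemma~\ref{l:selection} to pass to a well-distributed $X_i'$ and then takes an \emph{arbitrary} subset $Y_i\sbeq X_i'$ of the prescribed size $\lceil|\D_\th|^{1/3-\a+15f}\rceil$, still retaining the lower bound on $|(Y_i+H_i)\cap\D_i|$; the dimension bound $\dim(Y_i+H_i)\ll|Y_i|\dim(H_i)$ is then small simply because $|Y_i|$ was chosen small. The $(H,X)$ alternative is output precisely when $\a\le 20f$, since that is when $|H_i|,|X_i|$ land in the advertised ranges and the preceding dimension bounds fail.

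Your proposal instead case-splits on whether ``a suitable union of the $X_i'$'s has dimension $\ll\d^{-1+f}$'' or whether the $X_i$ ``have additive structure,'' and sends the failure branch to the $(H,X)$ output. This does not close the argument: in the intermediate range $20f\le\a\le 1/3-20f$ the sets $X_i$ are large and have no a priori dimension or additive structure at all, so your failure branch would trigger even though the lemma's first conclusion (the Bohr-set increment) is the one that actually holds there. The key observation you are missing is that Lemma~\ref{l:selection} gives a lower bound on $|(Y+H)\cap\D|$ for \emph{every} $Y\sbeq X'$, which lets one simply choose $Y$ of small cardinality — and hence trivially small dimension — instead of requiring structure in $X$. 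A secondary inconsistency: at one point you say $\a\le 20f$ is ``the opposite regime handled by the other lemma,'' but later you say ``the leftover case $\a$ close to its maximum $\frac13+\m$ is exactly the one where the $(H,X)$ alternative kicks in.'' Only the first of these is correct; the range $\a$ near $1/3+\m$ in fact yields the density increment because $|X_i|$ degenerates to something almost bounded. The construction you propose (take $k=\Theta(|\D_\th|^{\a-f})$ dimension witnesses of $H_i$) also breaks for $\a$ near $1/3$ as you yourself note, which is why the paper only takes $|\D_\th|^{\a-25f}$ of them and uses the small size of $|X_i|$ rather than the full count of $H_i$'s to control dimension.
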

\begin{proof}  By Theorem \ref{t:bateman-katz-structure} applied with $\t=1/3-2\m$  there exist $0\le \a\le 1/3+\m$ and sets $H_i, X_i$ for 
$1\le i\le \lceil |\D_\th|^{\a-f}\rceil$ such that
$$|\D_\th|^{1/3-2\m+\a-2f}\ll |H_i|\ll |\D_\th|^{1/3-2\m+\a+f}\,,$$
and
$$|\D_\th|^{2/3+2\m-2\a-2f}\ll |X_i|\ll  |\D_\th|^{2/3+2\m-2\a+f}\,,$$
that fulfill inequalities \eqref{bk1}--\eqref{bk4}. 
First, we assume that $1/3-20f\le \a\le 1/3+\m$ and put  $k=\lceil |\D_\th|^{\a-25f}\rceil.$  Then by \eqref{bk4}, \eqref{bk3} and Theorem \ref{t:dimension} we have 
$$\big|\bigcup_{i=1}^k(X_i+H_i)\cap \D_\th\big|\gg |\D_\th|^{\a-25f}|\D_\th|^{1-\a-f}\ge |\D_\th|^{1-f}\ge \th^{-2}\d^{-f}\,,$$
and 
\begin{eqnarray*}
\dim\big(\bigcup_{i=1}^k(X_i+H_i)\big)&\ll& \sum_{i=1}^k|X_i|\dim (H_i)\le |\D_\th|^{\a-25f}|X_i||H_i|^f\log |H_i|\\
&\ll& |\D_\th|^{2/3+2\m-\a-22f}\le |\D_\th|^{1/3-f}\ll  \d^{-1+f}\,.
\end{eqnarray*}
 
 Next let us assume that $20f\le \a\le 1/3-20f$. Observe that by \eqref{bk4} for every $i$ we have
\begin{equation}\label{sel}
|(X_i+H_i)\cap \D_i|\gg |\D_\th|^{1-\a-f}\gg |X_i||H_i||\D_\th|^{-3f}\gg |X_i||H_i|^{1-5f}\,.
\end{equation}
By Lemma \ref{l:selection} applied with $X_i, H_i$ and $\e=5f$ there is   $X_i'\sbeq X_i$ such that
$$|X_i'|\gg |X_i||H_i|^{-5f}\gg |\D_\th|^{2/3+2\m-2\a-5f}\ge |\D_\th|^{1/3-\a+15f}\,.$$ 
Let  $Y_i\sbeq X_i'$ be any subset of size $\lceil|\D_\th|^{1/3-\a+15f}\rceil.$
    By Lemma \ref{l:selection}, we have
$$|(Y_i+H_i)\cap \D_i|\gg |\D_\th|^{2/3-2\m+3f}\ge |\D_\th|^{2/3+f}\ge \th^{-2}\d^{-f}\,.$$
Again  Theorem \ref{t:dimension} and \eqref{bk3} imply that
\begin{eqnarray*}
\dim(Y_i+H_i)&\le& \dim(Y_i)\dim(H_i)\ll |\D_\th|^{1/3-\a+15f}|H_i|^{f}\log |H_i|\\
&\le&  |\D_\th|^{1/3-\a+17f}\le |\D_\th|^{1/3-f}\le \d^{-1+f}\,.
\end{eqnarray*}

In both above considered cases we found a subset of $\D_\th$ of size $\Omega(\th^{-2}\d^{-f})$ and 
dimension $O(\d^{-1+f})$ hence  by Lemma \ref{l:l2-increment} there is a regular Bohr set
$B$ with $\rk( B)\ll \d^{-1+f}$  and  radius $\Omega (\d^{1-f})$ such that
$$|(A+t)\cap B|\ge (1+\Omega(\th^2\th^{-2}\d^{-f}))\d|B|\gg \d^{1-f}|B|$$
for some $t.$

Finally, if $\a\le 20f$ then for every $i$ we have  
$
|H_i|\ll
|\D_\th|^{1/3-\m/3+\a+f}\le |\D_\th|^{1/3+21f},
 |H_i+H_i|\ll
|H_i|^{1+f}, |X_i|\ll |\D_\th|^{2/3+2\m/3-2\a+f}\le
|\D_\th|^{2/3+2f} $
and
$$|(X_i+H_i)\cap \D_\th|\gg |\D_\th|^{1-f}\,,$$
which completes the proof.
$\hfill\Box$
\end{proof}

\bigskip

Finally, we arrived at a more difficult case, where   $\D\approx
X+H, \, |X|\sim\d^{-2+O(\m)}, |H|\sim\d^{-1+O(\m)}$ and the set $H$ is highly
structured.

\begin{lemma}\label{l:increment-3-hard}  Let $A\subseteq \Z/N\Z, |A|=\d N,$ and assume that there are sets $H$ and $X$ such that $|H|\ll |\D_\th|^{1/3+21f}, |H+H|\ll |H|^{1+f}$, $|X|\ll  |\D_\th|^{2/3+2f},$ and
    $$|(X+H)\cap \D_\th|\gg |\D_\th|^{1-21f}.$$  Then   there is a regular Bohr set $B$  with $\rk( B)\le \d^{-1+f}$  and  radius $\Omega (\d^{1-f})$ such that
    $$|(A+t)\cap B|\gg \d^{1-f}|B|$$
    for some $t$.
\end{lemma}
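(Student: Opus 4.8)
The plan is to exploit the rigid structure $\D_\th \approx X+H$ with $H$ being an approximate subgroup. Since $|H+H|\ll|H|^{1+f}$, the set $H$ has tiny doubling, so by Theorem \ref{t:dimension} it lies in $\Span(\La_H)$ for a set $\La_H$ with $|\La_H|\ll |H|^{f}\log|H|\ll |\D_\th|^{o(1)}$, i.e. of essentially constant rank. This means that the Bohr set $B(\La_H, \g)$ generated by (the characters corresponding to) $H$ has rank $|\D_\th|^{o(1)}\ll \d^{-1+f}$, comfortably within budget. So the first branch of the argument will be: \emph{if a substantial portion of the mass of $\D_\th$ already lives in $\Span(\La_H)$}, then since $|\D_\th|\gg \th^{-2}\d^{2\m}\d^{-1}\gg \th^{-2}\d^{-f}$ we get $|\Span(\La_H)\cap\D_\th|\gg \th^{-2}\d^{-f}$, and Lemma \ref{l:l2-increment} applied with $\G=\Span(\La_H)$ gives the density increment $(1+\Omega(\th^2|\Span(\La_H)\cap\D_\th|))\d\gg\d^{1-f}$ on a regular Bohr set of rank $\dim(\Span(\La_H))\ll\d^{-1+f}$ and radius $\Omega(\d^{1-f})$, which is exactly the conclusion.

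If that fails, i.e. $\Span(\La_H)$ captures only a negligible fraction of $\D_\th$, then the $X$-direction must be carrying the complexity: essentially all $|\D_\th|^{1-21f}$ shifts $x+H$ (for $x$ in a large subset $X'\sbeq X$, obtained via Lemma \ref{l:selection}) hit $\D_\th$ in $\gg|H|^{1-O(f)}$ points, and these contributions are spread out over many cosets of $\Span(\La_H)$. The plan here is to find \emph{additive structure inside $X$}. Concretely, I would count additive quadruples or use the Balog--Szemer\'edi--Gowers / Plünnecke circle of ideas on $X$: since $X+H\supseteq$ most of $\D_\th$ and $|X||H|\sim|\D_\th|$, the sumset $X+H$ is barely larger than $|X|$ times $|H|$, which forces $X$ itself to have small doubling after passing to a large subset — or at least to have a large subset sitting in a low-dimensional span. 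Applying Theorem \ref{t:dimension} to that subset $X''\sbeq X$ of small doubling gives $\dim(X'')\ll|X''|^{o(1)}\log|X''|$; but $|X|\sim\d^{-2+O(\m)}$ is too large for this alone to give rank $\d^{-1+f}$. The resolution must be that we do not need all of $X$: we only need a sub-collection $Y\sbeq X''$ with $|Y|\sim\d^{-1+O(f)}$ such that $Y+H$ still meets $\D_\th$ in $\gg\th^{-2}\d^{-f}$ points (via Lemma \ref{l:selection} with $\e=O(f)$, as in the proof of Lemma \ref{l:increment-3-easy}), and then $\dim(Y+H)\le\dim(Y)\dim(H)\ll |Y|\cdot|\D_\th|^{o(1)}\ll\d^{-1+f}$, after which Lemma \ref{l:l2-increment} closes the argument.

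The main obstacle — and the genuinely delicate point — is establishing that $X$ contains a useful additive substructure (a large subset of small doubling, or better, a large subset in a span of size $\d^{-1+O(f)}$) purely from the compression condition $|(X+H)\cap\D_\th|\gg|\D_\th|^{1-21f}\sim|X||H|\cdot|\D_\th|^{-O(f)}$ together with the \emph{negation} of the first branch. The subtlety is that small doubling of $X+H$ does not automatically transfer to small doubling of $X$ when $H$ is only an approximate subgroup and when $X+H$ is not a full sumset but only largely contained in $\D_\th$; one has to use that $H$ is \emph{genuinely} group-like (doubling $|H|^{f}$, hence $\Span$ of near-constant rank) so that $X+H$ behaves like a union of $|X|$ translates of a fixed structured set, and the near-minimality of $|X+H|$ then pins down $X$ modulo $\Span(\La_H)$. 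I expect the proof to run a dichotomy: either many elements of $X$ are congruent modulo $\Span(\La_H)$ (collapsing $X$ onto a small set, which together with the first-branch machinery finishes things), or the images of $X$ in the quotient are spread out — but then those images form a set of size $\sim|X|$ whose sumset, projected, is still small, giving small doubling downstairs and hence a low-dimensional subset to feed into Lemma \ref{l:l2-increment}. Making this dichotomy quantitatively compatible with the exponents ($1/3$, $2/3$, and the $f$-losses) is where the real work lies.
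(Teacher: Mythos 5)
The first observation you make is correct and matches the paper: since $|H+H|\ll|H|^{1+f}$, Theorem~\ref{t:dimension} gives $\dim(H)\ll\d^{-2f}$, and one fixes a generating set $\La$ with $|\La|\ll\d^{-2f}$ and works with the Bohr set $B=B(\La,\g)$. You also correctly intuit that the final step is to feed a set of the form $U+H$, with $U\sbeq X$ small and low-dimensional, into Lemma~\ref{l:l2-increment}. But the heart of your argument is exactly where you flag the difficulty --- extracting usable additive structure from $X$ --- and your proposed route does not close that gap.

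Concretely, two things go wrong. First, your dichotomy is not the paper's dichotomy and is not the right one. You split on whether $\Span(\La_H)$ already captures $\gg\d^{O(1)}$ of $\D_\th$, but in the generic situation it captures essentially nothing: $\D_\th\approx X+H$ spreads over $\sim|X|$ cosets of the structure generated by $H$, so the ``first branch'' is vacuous. The paper's dichotomy is different and sharper: it asks whether $A$ already has density $\gg\d^{1-f}|B|$ on some translate of $B$. That gives the conclusion outright (rank $\ll\d^{-2f}$, huge radius), and its \emph{negation} $|A_t|\ll\d^{1-f}|B|$ for all $t$ is the hypothesis that powers the rest of the proof as a contradiction argument; you never use an upper bound on local density anywhere, and that upper bound is crucial.

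Second, your second branch appeals to Balog--Szemer\'edi--Gowers / Pl\"unnecke type reasoning to find a small-doubling subset of $X$, and you explicitly say you cannot make this quantitative. This is not what the paper does, and I do not see how to make your route work: the hypothesis is only that $X+H$ meets $\D_\th$ in many points, not that $|X+H|$ is small, and ``compression'' in the sense of the hypothesis does not transfer to small doubling of $X$ even modulo $\Span(\La_H)$. The paper instead moves to \emph{local} Fourier analysis on $B$: using the identity $\h{1_{A_t}}(x)=\frac1N\sum_h\hB(h)\hA(x-h)e^{2\pi it(x-h)/N}$, the fact that $|\hB(h)|\ge\frac12|B|$ for all $h\in H$ (because $B$ was built from $\La\supseteq$ a generating set for $H$) converts ``$x+h\in\D_\th$ for many $h\in H$'' into ``$x$ lies in the large spectrum of some $A_t=(A+t)\cap B$''. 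Summing over a good subset $Y\sbeq X$ from Lemma~\ref{l:selection}, averaging over $t$, and applying the Bateman--Katz--Bloom Lemma~\ref{l:bloom} to the \emph{local spectrum of $A_t$} produces low-dimensional subsets $Z_i\sbeq Y\sbeq X$. Iterating until $|\bigcup Z_i|\gg\d^{-3/2}$ and then taking $U+H$ yields a set of dimension $\ll\d^{-1/2-O(f)}$ but size $\gg\d^{-5/2+O(f)}$, which feeds into Lemma~\ref{l:l2-increment} and contradicts the assumed bound on $|A_t|$. The key mechanism you are missing is precisely this passage from global Fourier coefficients of $A$ along $X+H$ to local Fourier coefficients of $A\cap(B+t)$ along $X$; it is what lets one ``divide out by $H$'' and then apply the Bateman--Katz--Bloom dimension bound to what remains, without ever needing small doubling of $X$.
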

\begin{proof}  Since $\dim(H)\ll \d^{-2f}$ then there is a set $\La$ such that $|\La|\ll \d^{-2f}$ and $H\sbeq \Span(\La)$. Let $B=B(\Lambda,\gamma)$ be  a regular Bohr set with radius
$1/(6|\Lambda|)\le \gamma\le 1/(3|\Lambda|)$ (the existence of such
$\g$ is guaranteed by Lemma \ref{l:bohr-regular}). Then clearly
$B\sbeq B(H,1/3)$. Furthermore, for $h\in H$ and $b\in B$ we have
$$\|hb/N\|\le \sum_{\lambda\in \La} \|\lambda b/N\|\le 1/3,$$
so
$$|\hB(h)|\ge \sum_{b\in B}\Re \,e^{-2\pi ihb/N}\ge \frac12|B|\,.$$
 Put $A_t=(A+t)\cap B$ and  let us assume that for each $t$ we have
\begin{equation}\label{in:density-at}
|A_t|\ll \d^{1-f} |B|\,,
\end{equation}
as otherwise we would obtain the required density increment on a Bohr set with the rank
$O(\d^{-2f})$ and  radius $\Omega(\d^{2f})$.
For every $x\in \zn$ we have
$$\h {1_{A_t}}(x)=\frac1N\sum_h\hB(h)\hA(x-h)e^{2\pi i t(x-h)/N}\,,$$
hence by the Parseval formula
 \begin{equation}\label{for}
\sum_t|\h {1_{A_t}}(x)|^2=\frac1N\sum_h |\hB(h)\hA(x+h)|^2\ge \frac1N\sum_{h\in H} |\hB(h)\hA(x+h)|^2\,.
\end{equation}
Let $Y\sbeq X$ be a set given by Lemma \ref{l:selection} when applied to $X, H$ and $\e=5f$. Bounding similarly as in  \eqref{sel} we have 
$$|Y|\gg |X||H|^{-5f}\gg |\D_\th|^{2/3-O(f)}\,.$$ 
Therefore,  summing \eqref{for} over $Y$ we get
\begin{eqnarray*}\sum_{t}\sum_{x\in Y}|\h {1_{A_t}}(x)|^2&\ge& \frac1N\sum_{h\in H}\sum_{x\in Y} |\hB(h)\hA(x+h)|^2 \gg \frac1N |(Y+H)\cap \D_\th||B|^2 \d^{2-2\m}|A|^2\\
&\gg &\d^3 |\D_\th|^{1-O(f)}|B|^2|A|\gg \d^{O(f)}|B|^2|A|\,.
\end{eqnarray*}
Using averaging argument and  (\ref{in:density-at}) we see that there is a $t$ such that
\begin{equation}\label{sum}
\sum_{x\in Y}|\h {1_{A_t}}(x)|^2\gg  \d^{1+O(f)}|B|^2\gg \d^{-1+O(f)}|A_t|^2\,.
\end{equation}
We can ignore  all small terms in \eqref{sum} that satisfy
$$|\h {1_{A_t}}(x)|\le  c\frac{\d^{-1/2+O(f)}}{|X|^{1/2}}|A_t|=\d^{1/2+O(f)}|A_t| \,,$$
where $c>0$ is a sufficiently small constant,
so by  dyadic argument there is $\eta\gg \d^{1/2+O(f)}$ such that
$$\sum_{x\in Y: \,\eta |A_t|\le |\h {1_{A_t}}(x)|\le 2\eta |A_t|}|\h {1_{A_t}}(x)|^2\gg  \d^{-1+O(f)}\log^{-1}(1/\d)|A_t|^2\gg \d^{-1+O(f)}|A_t|^2\,.$$
Put 
$$S=\big\{x\in Y: \eta |A_t|\le |\h {1_{A_t}}(x)|\le 2\eta |A_t|\big\}$$ 
then by the above inequality it follows that
$$|S|\gg  \eta^{-2}  \d^{-1+O(f)}\,.$$
 By Lemma \ref{l:bloom}
there is a set $Z\sbeq S\sbeq Y$ such that $|Z|\ge \eta |S|\gg \eta^{-1} \d^{-1+O(f)}$ and  $\dim(Z)\ll \eta^{-1}\log(N/|A_t|).$
From \eqref{sum} one can deduce that $|A_t|\gg  \d^2|B|$ hence by Lemma \ref{l:bohr-size} it follows that
$$|A_t|\gg  \d^2|B|\ge \d^2 \g^{\d^{-2f}}N\ge (\d/8)^{2\d^{-2f}}N\,,$$
so 
$$\dim(Z)\ll \eta^{-1}\d^{-3f}\ll \d^{-1/2-O(f)}\,.$$

Put $\eta_1=\eta$ and let $Z_1\sbeq Y$ be any set of size $\Theta(\eta_1^{-1}\d^{-1+O(f)})$ such that $\dim(Z)\ll \eta_1^{-1}\d^{-3f}\ll \d^{-1/2-O(f)}$.  Then we apply the above argument to the  set $Y\setminus Z_1$ to find $Z_2\sbeq Y\setminus Z_1$ and  $\eta_2\gg \d^{1/2+O(f)}$ with the same properties
(observe that the whole argument can be applied for any set $Y'\sbeq Y$ giving essentially the same conclusion, as long as
$|Y'|\gg |Y|$). Applying this procedure $k$ times we obtain disjoint sets $Z_1,\dots,Z_k\sbeq Y$ such that $|Z_i|=\Theta(\eta_i^{-1}\d^{-1+O(f)})$ and $\dim(Z_i)\ll
\eta_i^{-1}\d^{-2f}\ll \d^{1-O(f)}|Z_i|$ for some
$\eta_i\gg \d^{1/2+O(f)},$ where $k$ is the smallest integer such that
$$|Z_1|+\dots+|Z_k|\ge \d^{-3/2}\,.$$
Since for each $i$ we have $|Z_i|\le \d^{-3/2}$ it follows that
$$|Z_1|+\dots+|Z_k|\le 2\d^{-3/2}\,.$$  
Put $U=\bigcup_{i=1}^k Z_i\sbeq X$ then $|U|\ge \d^{-3/2}$ and
$$\dim(U+H)\le\dim(H)\sum_{i=1}^k\dim(Z_i)\ll \d^{1-O(f)}\sum_{i=1}^k |Z_i| \ll \d^{-1/2-O(f)}\,.$$
 Again, by Lemma \ref{l:selection} we have
$$|U+H|\gg |U||H|^{1-10f}\gg  \d^{-5/2+O(f)}\,.$$
Lemma \ref{l:l2-increment} implies that there exists a Bohr  set $B'$ with $\rk(B')\ll \d^{-1/2-O(f)}$ and radius $\Omega(\d^{1/2+O(f)})$ such that
\begin{equation}\label{f}
|(A+t)\cap B'|\gg (1+\Omega(\d^{2+2\m} \d^{-5/2+O(f)}))\d |B'|\gg \d^{1/2+O(f)}|B'|\gg \d^{1-f}|B'|
\end{equation}
for some $t$ which is a contradiction. $\hfill\Box$
\end{proof}

\section{Large Fourier coefficients}
\label{s:large-coeff}
In this section we obtain the density increment if \eqref{mid} and \eqref{sml} do not hold, so there is a kind of  spectral gap in terms of $L^3$-norm. 
We will use the well-known Chang's Spectral Lemma \cite{chang}, which states that for every $\th$ we have
$$\dim(\D_\th)\ll \th^{-2}\log (1/\d)\,.$$
For any function $f: \Z_N\rightarrow \mathbb R$ define 
$$T(f)=\sum_{x+y=2z}f(x)f(y)f(2z)\,.$$  We also make use of the following  lower bound on the number of $3$-term arithmetic progressions in a set $S\sbeq \zn$ with density $\g$ proven by Bloom \cite{bloom}
$$T(1_A)\ge \g^{O(\g^{-1}\log^4(1/\g))}N^2.$$

\begin{lemma}\label{l:increment-4}  Let $A\subseteq \Z/N\Z,$ be a set with density $\d$ such that \eqref{mid} and \eqref{sml} do not hold. Then   there is a regular 
Bohr set $B$  
with $\rk( B)\le \d^{-2/5}$  and radius $\Omega (\d^{4})$ such that
    $$|(A+t)\cap B|\gg \m\frac{\log(1/\d)}{\log\log^5(1/\d)}\d|B|$$
    for some $t$.
\end{lemma}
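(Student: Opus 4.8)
The hypothesis is that neither \eqref{mid} nor \eqref{sml} holds, so for every threshold $\th$ with $\d^{1+\m}\le\th\le\d^{1-\m}$ the $L^3$-mass of $\h{1_A}$ carried by Fourier coefficients in the window $[\th|A|,2\th|A|]$ is small; in fact the total $L^3$-mass on $\D_{\d^{1+\m}}\setminus\D_{\d^{1/10}}$ is $o(\d^{\m/5}|A|^3)$. Combined with the identity $\frac1N\sum_r\h{1_A}(r)^2\h{1_A}(-2r)=|A|$ and the Parseval bound, this forces the progression count to be essentially controlled by the genuinely large coefficients, those in $\D_{\d^{1/10}}$, together with the negligible tail. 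So I would first isolate the set $\La=\D_{\d^{1/10}}\setminus\{0\}$ of large frequencies; by Parseval $|\La|\le\d^{-1/5}\d^{-1}\cdot\d^{-1/5}$... more precisely $|\La|\le(\d^{1/10})^{-2}\d^{-1}=\d^{-6/5}$, which is too big, but Chang's lemma gives $\dim(\D_{\d^{1/10}})\ll\d^{-1/5}\log(1/\d)\ll\d^{-2/5}$, so $\La$ sits inside the span of a set of size $O(\d^{-2/5})$.

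Next I pass to the regular Bohr set $B=B(\G,\g)$ with $\G$ the $O(\d^{-2/5})$-element generating set coming from Chang's lemma and $\g=\Omega(\d^{-2/5}\cdot\d^{-?})$ chosen (via Lemma~\ref{l:bohr-regular}) small enough that every $r\in\D_{\d^{1/10}}$ is almost constant on $B$, i.e. $\|rb/N\|$ is tiny for $b\in B$; this is why the radius must be taken $\Omega(\d^4)$, to beat the $\d^{1/10}$ threshold with room to spare after summing $O(\d^{-2/5})$ generators. On such a $B$ the function $1_A$ restricted to a translate $A_t=(A+t)\cap B$ has the property that $\h{1_{A_t}}$ concentrates: the large-frequency part of $1_A$ transfers to the main term of $\h{1_{A_t}}(0)=|A_t|$, while the complementary part is small precisely by the failure of \eqref{mid}, \eqref{sml} and the Parseval tail bound. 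The upshot should be that for a typical $t$ the set $A_t$ has $\|1_{A_t}-\d 1_B\|$ small in a suitable sense — concretely, $A_t$ is close in $L^2$ to its own mean on $B$, and on $B$ viewed as an approximate group the density of $A_t$ differs from $\d$ by a controlled amount. Then I would run the Fourier-approximation / $L^3$-gap argument: write the progression count of $A_t$ inside $B$ and compare it with the Bloom lower bound $T(1_{A_t})\ge\g_t^{O(\g_t^{-1}\log^4(1/\g_t))}|B|^2$, where $\g_t$ is the density of $A_t$ in $B$; if $A_t$ were still progression-free in $B$ this forces $\g_t$ to be bounded below by $\exp(-O(\g_t^{-1}\log^4(1/\g_t)))$-type inequality, hence $\g_t^{-1}\log^4(1/\g_t)\gg\log(1/\d)$, which after rearranging gives exactly $\g_t\gg\m\log(1/\d)/\log\log^5(1/\d)\cdot\d$ (the factor $\m$ entering through the slack in the $L^3$ windows).

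More carefully, the mechanism is: since $A$ (hence $A_t$) has no non-trivial $3$-AP, $T(1_{A_t})$ equals its trivial-progression contribution $|A_t|$, so $|A_t|\ge\g_t^{O(\g_t^{-1}\log^4(1/\g_t))}|B|^2$, i.e. $\g_t|B|\ge\g_t^{O(\g_t^{-1}\log^4(1/\g_t))}|B|^2$, giving $|B|\le\g_t^{-1-O(\g_t^{-1}\log^4(1/\g_t))}$. But $|B|\ge\g^{|\G|}N\gg\d^{O(\d^{-2/5}\log(1/\d))}N$ by Lemma~\ref{l:bohr-size}, and $N\ge\d^{-1}$ up to constants (since $|A|=\d N$ and $A\sbeq[N']$), so $\log|B|$ is at least $\log(1/\d)$ times a quantity that is $O(\d^{-2/5})$; balancing $\log|B|$ against $O(\g_t^{-1}\log^5(1/\g_t))$ (using $\log|B|\gg\log(1/\d)$ from $|B|\ge\d^{O(1)}N$ and $N$ large) yields $\g_t^{-1}\log^5(1/\g_t)\gg\log(1/\d)$, hence the claimed bound on $\g_t$ after absorbing the $\log\log$ factors. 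The existence of a $t$ with $A_t$ nonempty and with $|B|$ genuinely large — not destroyed by the Bohr-set size fluctuations — is handled by regularity of $B$ and by the standard fact that $\sum_t|A_t|=|A||B|$ so some translate meets $B$ in $\ge\d|B|$ points.

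**Main obstacle.** The delicate point is the transfer step: showing that the failure of \eqref{mid} and \eqref{sml}, which only controls an $L^3$-window, actually yields a usable $L^2$-control on $1_{A_t}-\g_t1_B$ on the Bohr set — equivalently, that $1_A$ is genuinely "structured" on $B$ and not merely that its medium Fourier coefficients are individually small. One must convert the $L^3$ smallness into an $L^2$ statement by interpolating against the $L^\infty$ bound $|\h{1_A}(r)|\le\d^{1-\m}|A|$ valid off $\D_{\d^{1-\m}}$, and separately handle $\D_{\d^{1-\m}}\setminus\D_{\d^{1/10}}$ where \eqref{mid} gives the control; the bookkeeping of which frequencies contribute to the Bohr-set concentration versus which are thrown into an error term, and making the constant come out as a clean multiple of $\m$, is where the real work lies. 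Everything else is an application of Lemma~\ref{l:bohr-regular}, Lemma~\ref{l:bohr-size}, Chang's lemma, and Bloom's $3$-AP lower bound.
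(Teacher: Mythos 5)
Your plan takes a genuinely different route from the paper's, and unfortunately the route it takes leads to a circularity rather than a proof.

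The paper's key idea is a global Fourier-approximation argument, not a restriction argument. Set $\beta=\frac1{|B|}1_B$ where $B$ is the Bohr set generated by the Chang generators of $\Delta_{\delta^{1/10}}$, and consider $f=\beta*1_A$. The paper applies Bloom's $3$-AP lower bound not to $A$ itself (nor to a translate $A_t\cap B$) but to the superlevel set $S=\{t:f(t)\ge\delta/2\}$, which \emph{a priori} has nothing to do with progression-freeness. If the increment conclusion fails, i.e. $f(t)\le L\delta$ everywhere with $L=c\mu\log(1/\delta)/\log\log^5(1/\delta)$, then $\sum_tf(t)=|A|$ forces $|S|\ge N/L$, so $S$ has fairly large density $\ge 1/L$; Bloom's bound applied to $S$ then gives $T(f)\ge\frac18\delta^3T(1_S)\gg\delta^{3+\mu/10}N^2$. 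Separately, the failure of \eqref{mid} and \eqref{sml} is used exactly once, and precisely where it matters: to show $|T(1_A)-T(f)|\ll\delta^{3+\mu/5}N^2$, by splitting the $L^3$-sum over $r$ into $\Delta_{\delta^{1/10}}$ (where $\h\beta(r)\approx 1$ so the factor $1-\h\beta(r)^2\h\beta(-2r)$ is $O(\delta^2)$), the gap $\Delta_{\delta^{1+\mu}}\setminus\Delta_{\delta^{1/10}}$ (small by hypothesis), and the rest (small by the trivial $L^\infty$ bound and Parseval). Combining gives $T(1_A)\gg\delta^{3+\mu/10}N^2$, contradicting $T(1_A)=|A|=\delta N$.

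Your proposal instead restricts $A$ to a Bohr set and tries to apply a Bohr-set version of Bloom's lower bound directly to the progression-free set $A_t\subseteq B$. That step is where the argument fails conceptually, not merely technically. The chain "$T(1_{A_t})=|A_t|$ and $T(1_{A_t})\ge\gamma_t^{O(\gamma_t^{-1}\log^4(1/\gamma_t))}|B|^2$ hence $\log|B|\ll\gamma_t^{-1}\log^5(1/\gamma_t)$" is exactly the statement of Bloom's theorem for the localized set; it gives an \emph{upper} bound on the density of a progression-free subset of $B$ in terms of $\log|B|$, and it only produces a contradiction when $\log|B|$ exceeds $\gamma_t^{-1}\log^5(1/\gamma_t)$ — but that threshold is precisely the quantity the Roth-type theorem is trying to control. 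You cannot bootstrap an improved Roth bound out of it; you merely reproduce the input. Moreover, the "transfer step" you flag as delicate (converting the $L^3$-gap on $\h{1_A}$ into $L^2$-control of $1_{A_t}-\gamma_t 1_B$) is not what the paper does and is not obviously attainable: the hypothesis is an $L^3$ statement about $\h{1_A}$ over the whole group, and it feeds into the comparison $T(1_A)\approx T(\beta*1_A)$ directly, with no passage through a pseudorandomness statement about $A_t$ on $B$. You also never actually \emph{use} the failure of \eqref{mid} and \eqref{sml}: in your outline those hypotheses are mentioned only as something that "should" yield $L^2$ control, but they do no concrete work, whereas in the paper's proof they are the engine. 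Finally there is a small slip in the radius: you write $\gamma=\Omega(\delta^{-2/5}\cdot\delta^{-?})$, which is $>1$; the radius must be a small positive power of $\delta$ (the paper takes $\gamma\gg\delta^3$, which together with $|\Lambda|\ll\delta^{-2/5}$ makes $|\h\beta(r)-1|\ll\delta^2$ for $r$ in the large spectrum).
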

\begin{proof} By Chang's lemma 
$$\dim(\D_{\d^{1/10}})\ll \d^{-1/5}\log (1/\d)\le \d^{-2/5}$$
 hence there is a set $\La$ such that $|\La|\ll  \d^{-2/5}$ and 
$\D_{\d^{1/10}}\sbeq \Span(\La)$. Let $B=B(\Lambda,\g)$ be  a regular Bohr set with radius $\g\gg \d^3$. Let $\b=\frac1{|B|}1_B$ then for every $r\in \D_{\d^{1/10}}$ we have
\begin{equation}\label{beta}
\big|\h \b(r)-1\big|\le \frac1{|B|}\sum_{b\in B} |e^{-2\pi i\lambda b/N}-1|\le \frac{2\pi}{|B|}\sum_{b\in B} \sum_{\lambda\in \La}\|rb/N\|\le 2\pi \d^2\,,
\end{equation}
and similarly $|\h \b(2r)-1| \ll \d^2.$
Let $f: \Z_N\rightarrow [0,1]$ be a function defined by
$$f(t)=\b*1_A(t)\,.$$
 We may assume that  $f(t)=\frac1{|B|}|(A+t)\cap B|\le L\d,$
where
$$L=c\m\frac{\log(1/\d)}{\log\log^5(1/\d)}$$ 
and $c>0$ is a small constant. Put
$$S=\big\{ t: f(t)\ge \d/2\big \}$$
then by $\sum_tf(t)=|A|$ it follows that $|S|\ge N/L$ hence by Bloom's Theorem we have
\begin{equation}\label{tf}
T(f)\ge \frac18\d^3T(S)\gg \d^3 \exp({-O(L\log^5 L)})N^2\gg \d^{3+\m/10}N^2\,.
\end{equation}
Our next step is to compare $T(f)$ and $T(1_A).$ By \eqref{beta}, \eqref{mid}, \eqref{sml}, Parseval's formula and  H\"older's inequality  we have
\begin{eqnarray*}
\big|T(1_A)-T(f)\big|&=&\frac1{N}\big|\sum_{r=0}^{N-1} \h {1_A}(r)^2\hA(-2r)-\sum_{r=0}^{N-1} \h f(r)^2\h f(-2r)\big|\\
&\le&\frac1{N}\sum_{r\in \D_{\d^{1/10}}} |\hA(r)^2\hA(-2r)(1-\h \b(r)^2\h \b (-2r))|+\frac2{N}\sum_{r\not\in \D_{\d^{1/10}}}|\hA(r)|^3\\
&\ll & \d^2 \frac1{N}\sum_{r\in \D_{\d^{1/10}}}|\hA(r)|^3+\frac2{N}\sum_{r\in \D_{\d^{1+\m}}\setminus \D_{\d^{1/10}} }|\hA(r)|^3+\frac2{N}\sum_{r\not\in \D_{\d^{1+\m}}}|\hA(r)|^3\\
&\ll& \d^2|A|^2+\d^{1+\m/5}|A|^2+2\d^{1+\m}|A|^2\ll \d^{3+\m/5}N^2\,.
\end{eqnarray*}
Thus, by \eqref{tf} 
$$T(1_A)\gg \d^{3+\m/10}N^2\,,$$
which is a contradiction.$\hfill\Box$
\end{proof}

\section{Proof of Theorem 1}\label{s:iteration}

Summarizing all considered cases, we can state the following result.

\begin{theorem}\label{t:increment}
There exists an absolute constant $c>0$ such that   the following holds.
 Let $A\subseteq \Z/N\Z$ be a set without any non-trivial arithmetic progressions of length three and let $|A|=\d N$.  Then  there is a regular Bohr set $B$ with 
$\rk( B)\ll \d^{-1+c}$  and  radius $\Omega( \d^{4})$ such that     for some $t$
    $$|(A+t)\cap B|\gg \frac{\log(1/\d)}{\log\log^5(1/\d)}\d|B|.$$

\end{theorem}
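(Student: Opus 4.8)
The plan is to assemble Theorem~\ref{t:increment} directly from the four case lemmas already proved, since the excerpt has been structured precisely so that these cases are exhaustive. First I would recall that, after embedding $A$ into $\zn$ for a prime $N\asymp N'$, the standard Fourier argument reproduced in Section~\ref{s:large-coeff} gives $\sum_{r\in\D_{\d/4}\setminus\{0\}}|\hA(r)|^3\ge\frac14|A|^3$, and then a dyadic decomposition of the range $[\d^{1+\m}|A|,\d^{1/10}|A|]$ of $|\hA(r)|$ shows that at least one of the three alternatives \eqref{mid}, \eqref{sml}, or their negation must hold. (One should note the $L^3$-mass living above $\d^{1/10}|A|$ or below $\d^{1+\m}|A|$ is harmless: the top range is handled by Lemma~\ref{l:increment-4}'s use of Chang's lemma, and the bottom range contributes a negligible $\d^{1+\m}|A|^2$ term, which is why the negation of \eqref{mid} and \eqref{sml} still yields a workable ``spectral gap'' in the proof of Lemma~\ref{l:increment-4}.)

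Next I would dispatch the cases one by one. If \eqref{mid} holds, then \eqref{in:l3-fourier} and hence \eqref{in:spectrum-size} hold for some $\d^{1-\m}\le\th\le\d^{1/10}$, and Lemma~\ref{l:increment-1} produces a regular Bohr set of rank $O(\d^{-1+\m/3})$ with density increment to $\d^{1-\m/4}$ on a translate; choosing $\m$ small this beats the claimed $\frac{\log(1/\d)}{\log\log^5(1/\d)}\d$ by a wide margin, and the radius $\Omega(\d^{1-\m/3})$ is certainly $\Omega(\d^4)$. If \eqref{sml} holds, we get $\th$ with \eqref{in:spectrum-size-1}. Then either the spectrum is $20\m$-additively smoothing, in which case Lemma~\ref{l:increment-2} gives rank $O(\d^{-1+\m/2})$, radius $\Omega(\d^{1-\m/2})$ and density $\gg\d^{1-\m/2}$; or it is not, in which case (as computed in Section~\ref{s:nonsmoothing}) Theorem~\ref{t:bateman-katz-structure} applies with $\t=1/3-2\m$, $\s=16\m$, so Lemma~\ref{l:increment-3-easy} applies and either directly gives a Bohr set of rank $O(\d^{-1+f})$, radius $\Omega(\d^{1-f})$ and density $\gg\d^{1-f}$, or outputs the structured pair $H,X$ feeding Lemma~\ref{l:increment-3-hard}, which then delivers rank $\le\d^{-1+f}$, radius $\Omega(\d^{1-f})$ and density $\gg\d^{1-f}$. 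In every subcase of \eqref{sml} the gain $\d^{-c}$ for some $c\in\{\m/2,\m/4,f\}$ again dominates $\frac{\log(1/\d)}{\log\log^5(1/\d)}$, using the standing assumption $\d^{\m/20}<\log^{-1}(1/\d)$, and the radius is $\Omega(\d^4)$.

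Finally, if neither \eqref{mid} nor \eqref{sml} holds, Lemma~\ref{l:increment-4} is exactly the statement we need: it gives a regular Bohr set of rank $\le\d^{-2/5}\ll\d^{-1+c}$ and radius $\Omega(\d^4)$ on which some translate of $A$ has density $\gg\m\frac{\log(1/\d)}{\log\log^5(1/\d)}\d$. Since $\m$ is an absolute constant, absorbing it into the implied constant gives the clean bound stated in the theorem. So the proof is simply: combine the trichotomy with Lemmas~\ref{l:increment-1}, \ref{l:increment-2}, \ref{l:increment-3-easy}, \ref{l:increment-3-hard}, \ref{l:increment-4}, and take $c=\min\{\m/4,f\}$ (any $c$ smaller than all the exponent savings works), noting that in the first four cases the density increment is polynomially large in $1/\d$ and hence a fortiori at least $\frac{\log(1/\d)}{\log\log^5(1/\d)}\d$.

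The only mild subtlety — and the one place I would be careful — is the bookkeeping that the four cases genuinely exhaust all possibilities and that their hypotheses match: in particular verifying that when \eqref{sml} holds the $\th$ it produces satisfies both $|\D_\th|\gg\th^{-2}\d^{-1}$ (needed so that Shkredov's bound combined with Hölder forces the smoothing/nonsmoothing dichotomy in the form used) and that the parameters $\t=1/3-2\m$, $\s=16\m$ lie in the admissible range of Theorem~\ref{t:bateman-katz-structure}; all of this is already carried out in Sections~\ref{s:smoothig} and~\ref{s:nonsmoothing}, so it is a matter of quoting rather than proving. There is no new mathematical obstacle here — Theorem~\ref{t:increment} is a clean corollary, and its proof is essentially a one-paragraph case check.
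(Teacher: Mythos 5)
Your proposal is correct and matches the paper's own proof: Theorem~\ref{t:increment} is assembled by the trichotomy \eqref{mid}/\eqref{sml}/neither, quoting Lemmas~\ref{l:increment-1}, \ref{l:increment-2}, \ref{l:increment-3-easy}, \ref{l:increment-3-hard}, \ref{l:increment-4}, and observing that in the first four cases the density gain $\d^{-\Omega(1)}$ dominates the logarithmic gain of Lemma~\ref{l:increment-4} while all ranks and radii fit the stated bounds. The paper chooses $c=\m/3$ (using the standing assumption $f(\m)\ge\m$) rather than your $c=\min\{\m/4,f\}$, but either choice works; this is cosmetic.
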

\begin{proof}
Let us first make a suitable  choice of parameters . Let  $\m>0$ be a constant
the such that  for every $\s\le \mu,$  \eqref{f} holds with  $f=f(16\s)$.  Since
we assumed that $f(\m)\ge \m,$ we see that in all considered cases
in Lemma \ref{l:increment-1}, Lemma \ref{l:increment-2},
Lemma \ref{l:increment-3-easy},  Lemma \ref{l:increment-3-hard} and Lemma \ref{l:increment-4} we obtain  density increment at least by  factor
of $\Omega(\m\log(1/\d)\log\log^{-5}(1/\d))$ on a Bohr set with $\rk( B)\ll \d^{-1+\m/3}$  and
 radius $\Omega( \d^{4})$. Thus, it is enough to take $c=\m/3$. $\hfill\Box$
\end{proof}

\bigskip

After the first step of our iterative procedure we obtain a larger density increment on a
low-rank Bohr set and then   we  apply less effective method of Bloom (Theorem 7.1 \cite{bloom}).

\begin{lemma}\label{l:bloom-iteration} {\rm\cite{bloom}} There exists an absolute constant $c_1 > 0$ such that the following
holds. Let $B\sbeq \Z/N\Z$ be a regular Bohr set of rank $d$. Let
$A_1 \sbeq B$ and $A_2 \sbeq B_\e,$ each with relative densities
$\a_i$. Let $\a = \min(c_1, \a_1, \a_2)$ and assume that   $d \le
\exp(c_1(\log^2(1/\a)).$ Suppose that   $B_\e$ is also regular and
$c_1\a/(4d) \le \e \le c_1\a/d.$ Then either
\begin{itemize}
\item[(i)] there is a regular Bohr set $B'$ of rank $\rk(B') \le d +
O(\a^{-1}\log(1/\a))$ and size
$$ |B'| \ge \exp\big(-O(\log^2(1/\a)(d + \a^{-1}\log(1/\a)))\big)|B|$$
such that 
$$|(A_1+t)\cap B'|\gg (1+c_1)\a_1|B'|$$
for some $t\in \zn$;
\item[(ii)] or there are $\Omega(\a_1^2\a_2 |B| |B_\e|)$ three-term arithmetic progressions $x+y=2z$ with $x,y\in A_1, z\in A_2$;
\end{itemize}
\end{lemma}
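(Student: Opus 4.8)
The plan is to run Roth's Fourier density-increment dichotomy with the regular Bohr set $B$ playing the role of the ambient group. Let $\mathcal T$ denote the number of configurations counted in (ii), so that
$$\mathcal T=\sum_{z\in A_2}(1_{A_1}*1_{A_1})(2z)=\frac1N\sum_r\widehat{1_{A_1}}(r)^2\,\widehat{1_{A_2}}(-2r).$$
Write $1_{A_1}=\alpha_1 1_B+f_1$, where $f_1$ is the balanced function (zero sum, supported in $B$). Then $\mathcal T$ splits into a leading term $\alpha_1^2\sum_{z\in A_2}(1_B*1_B)(2z)$ and terms carrying $f_1$. For $z\in A_2\subseteq B_\e$ one has $\|t(2z)/N\|\le 2\e\gamma$ for $t$ in the generating set, so $2z\in B_{2\e}$, and regularity of $B$ (Lemma~\ref{l:bohr-regular}) gives $(1_B*1_B)(2z)\ge(1-O(\rk(B)\e))|B|$; since $\rk(B)\e\le c_1\alpha\ll1$, the leading term is at least $\tfrac12\alpha_1^2\alpha_2|B||B_\e|$, the target size.

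Next I would dispose of the cross term $2\alpha_1\sum_{z\in A_2}(1_B*f_1)(2z)$. For $w\in B_{2\e}$, writing $(1_B*f_1)(w)=\sum_y f_1(y)(1_B(w-y)-1)$ (legitimate since $\sum f_1=0$) shows $|(1_B*f_1)(w)|\le\#\{y\in B:\ w-y\notin B\}\le O(\rk(B)\e)|B|$ by regularity, so this term is $O(\rk(B)\e)\cdot\alpha_1\alpha_2|B||B_\e|$, which is below $\tfrac18$ of the leading term precisely because $\e\le c_1\alpha/\rk(B)$. The genuinely substantial contribution is $\sum_{z\in A_2}(f_1*f_1)(2z)=\frac1N\sum_{r\neq0}\widehat{f_1}(r)^2\widehat{1_{A_2}}(-2r)$, and this is where the dichotomy operates, governed by the large spectrum $R=\{r\neq0:\ |\widehat{f_1}(r)|\ge c\alpha\alpha_1|B|\}$ for a small absolute constant $c$.

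The dichotomy then reads as follows. Either the $\ell^2$-mass $\sum_{r\in R}|\widehat{f_1}(r)|^2$ is below a suitable multiple of $\alpha_1^2|B|^2$, in which case one estimates the $f_1*f_1$ contribution (treating $r\in R$ and $r\notin R$ separately) down to less than $\tfrac12$ of the leading term, so $\mathcal T\gg\alpha_1^2\alpha_2|B||B_\e|$ and we are in conclusion (ii); or $\sum_{r\in R}|\widehat{f_1}(r)|^2\gg\alpha_1^2|B|^2$, in which case I would apply the Bohr-set analogue of Lemma~\ref{l:bloom} to extract $R'\subseteq R$ of comparable $\ell^2$-weight with $\dim(R')\ll\alpha^{-1}\log(1/\alpha)$, and feed the generators of $B$ together with $R'$ into the Bohr-set analogue of the $L^2$-increment (Lemma~\ref{l:l2-increment}). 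This produces a regular Bohr set $B'$ of rank $\le d+O(\alpha^{-1}\log(1/\alpha))$ carrying a translate of $A_1$ of relative density $\ge(1+c_1)\alpha_1$, i.e.\ conclusion (i). The hypothesis $d\le\exp(c_1\log^2(1/\alpha))$ enters here: it forces $\log\rk(B')\ll\log^2(1/\alpha)$, so that Lemma~\ref{l:bohr-size} yields the stated lower bound on $|B'|$; the regularised radius of $B'_\e$ is then placed in the required window by Lemma~\ref{l:bohr-regular}.

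The main obstacle is the error analysis for the $f_1*f_1$ term in the first horn: a naive Cauchy--Schwarz bound for $\frac1N\sum_{r\notin R}|\widehat{f_1}(r)|^2|\widehat{1_{A_2}}(-2r)|$ against Parseval's identity $\sum_r|\widehat{1_{A_2}}(-2r)|^2=N|A_2|$ loses a factor $\sim(|B_\e|/|B|)^{1/2}$, which is exponentially small in $d$ and would force the spectral threshold $c$ to be disastrously small. Overcoming this requires exploiting that $A_2$ lives inside the Bohr set $B_\e$: for instance, one first replaces $1_{A_1}$ in the count by its convolution with the normalised indicator of an auxiliary fine regular Bohr set $B''\subseteq B$ of radius a small multiple of $\e$ (so that, by regularity of $B$, this smoothing changes $\mathcal T$ by at most a small multiple of the leading term), after which $\widehat{1_{A_2}}(-2r)$ is only paired with frequencies on which the smoothing factor is genuinely small. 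Making this smoothing-and-regularity bookkeeping yield a gain $c_1$ that is an \emph{absolute} constant, independent of $d$ and $\alpha$, is the technical heart of the proof; the spectral step in the second horn is routine once Lemma~\ref{l:bloom} and the Bohr-relativised $L^2$-increment are in hand.
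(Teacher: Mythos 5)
You should first note that the paper does not prove this statement at all: it is imported verbatim as Theorem 7.1 of Bloom \cite{bloom}, and the only ``proof'' in the paper is that citation. So the comparison is really with Bloom's proof, and against that standard your proposal is an outline of the correct architecture (balanced-function decomposition $1_{A_1}=\alpha_1 1_B+f_1$, main term and cross term controlled by regularity, a dichotomy between many three-term progressions and a large localized spectrum, and a Bloom-type spectral structure result to keep the rank increase at $O(\alpha^{-1}\log(1/\alpha))$ with an absolute-constant density gain), but it is not a proof. The decisive step is exactly the one you flag yourself: bounding the $f_1*f_1$ contribution without the $(|B_\varepsilon|/|B|)^{1/2}$ loss that comes from pairing $\widehat{f_1}$ globally against $\widehat{1_{A_2}}$ by Cauchy--Schwarz and Parseval. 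This loss is not a minor bookkeeping issue; it is the reason the Bohr-set method of Bourgain, Sanders and Bloom is delicate. Your proposed remedy (pre-smoothing $1_{A_1}$ by the normalized indicator of an auxiliary finer Bohr set $B''$) is the right kind of idea, but as written it is only a gesture: one must define the relevant spectrum relative to $B''$ (frequencies on which characters are essentially constant on $B''$, equivalently where the smoothing multiplier is close to $1$), show that outside this set the smoothed count differs from the true count by a small multiple of the main term \emph{uniformly with absolute constants}, and then run the increment on a further contracted Bohr set whose radius and regularity are compatible with the hypotheses $c_1\alpha/(4d)\le\varepsilon\le c_1\alpha/d$. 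None of this is carried out, so neither horn of the dichotomy is actually established with an absolute constant $c_1$.

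A second gap of the same nature: in the increment horn you invoke ``the Bohr-set analogue of Lemma \ref{l:bloom}'' and a ``Bohr-relativised'' version of Lemma \ref{l:l2-increment}. These relative versions are themselves substantial results --- the relativization of the spectral structure lemma to Bohr sets is a large part of Bloom's paper, and it is precisely what makes the rank increase $O(\alpha^{-1}\log(1/\alpha))$ rather than the $O(\alpha^{-2}\log(1/\alpha))$ that a Chang-type argument would give --- so assuming them without proof leaves the argument circular in spirit: if one is willing to quote Bloom's relative lemmas, one may as well quote the statement itself, which is what the paper does. In short, your main-term and cross-term estimates are fine, but the technical heart (the localized error analysis and the relative spectral lemma, including the verification of the stated rank and size bounds for $B'$ via Lemmas \ref{l:bohr-size} and \ref{l:bohr-regular}) is missing, and this is a genuine gap rather than a routine omission.
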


Now we are in position to finish the proof of our main result. We will not give detailed proof of the iteration procedure as it is very standard and the reader can find details on it in the literature (see \cite{bloom}, \cite{sanders-3/4}).  In the first step we apply Theorem \ref{t:increment} to obtain  a regular Bohr set $B^0$
with $\rk( B^0)\ll \d^{-1+c}$, radius $\Omega( \d^{4})$ and  a  progression-free  set $A_0\sbeq A+t$ for some $t$ such that 
$$|A_0\cap B^0|\gg \a|B^0|\,,$$
where 
$$\a\gg \frac{\log(1/\d)}{\log\log^5(1/\d)}\d\,.$$
By Lemma \ref{l:bohr-size} we have 
$$|B^0|\ge \exp\big (-O(\d^{-1+c}\log(1/\d))\big)N\,.$$
Next we 
iteratively  apply  Lemma \ref{l:bloom-iteration}   and let $B^i$ be Bohr sets obtained in the iterative procedure. Observe  that after $k\ll \log (1/\a)$ steps we will be in the case $(ii)$ of Lemma \ref{l:bloom-iteration} and that  $\rk(B^i)\ll \a^{-1}\log^2(1/\a)$ for every $i\le k$. Thus, there are
$$\Omega( \a^3 |B^k||B^k_\e|)$$
three-term arithmetic progressions in $A,$ where $\e\ge c_1\a/(4\rk(B^k))\gg \a^2\log^2(1/\a).$ 

 Hence by Lemma \ref{l:bloom-iteration}
we have 
$$|B^k|\ge \exp\big(-O(\a^{-1}\log^4(1/\a))\big)N\ge  \exp\big(-O(\d^{-1}\log^3(1/\d))\log\log^5(1/\d)\big)N\,,$$
and by Lemma \ref{l:bohr-size}
\begin{eqnarray*}
|B^k_\e|&\ge& \exp\big(-O(\a^{-1}\log^3(1/\a))\big)\exp\big(-O(\a^{-1}\log^4(1/\a))\big)N\\
&\ge&  \exp\big(-O(\d^{-1}\log^3(1/\d)\log\log^5(1/\d))\big)N\,.
\end{eqnarray*}
Therefore $A$ contains 
 $$\a^3\exp\big(-O(\d^{-1}\log^3(1/\d)\log\log^5(1/\d))\big)N^2$$
arithmetic progressions of length three.
Since there are only $|A|$ trivial progressions it follows that
$$|A|\ge \a^3 \exp\big(-O(\d^{-1}\log^3(1/\d)\log\log^5(1/\d))\big)N^2\,,$$
which completes the proof of Theorem \ref{t:roth}.

\section{Concluding remarks}

In Lemma \ref{l:increment-1}, Lemma \ref{l:increment-2},
Lemma \ref{l:increment-3-easy} and   Lemma \ref{l:increment-3-hard} we obtained a density increment by factor of $\d^{-c}$ on a low-rank Bohr set, where $c$ is a positive constant. Such  density increment even in the first step of an iterative method would lead to the upper bound $O((\log N)^{-1-c})$ in Theorem \ref{t:roth}. However, in Lemma \ref{l:increment-4} we only were able to prove  an increment by factor $(\log(1/\d))^{1-o(1)}$. Any refinement of   Lemma \ref{l:increment-4} will directly imply an improvement of  Theorem \ref{t:roth}.
{}

\bigskip

\no{Faculty of Mathematics and Computer Science,\\ Adam Mickiewicz
University,\\ Umul\-towska 87, 61-614 Pozna\'n, Poland\\} {\tt
schoen@amu.edu.pl}

\end{document}